\numberwithin{equation}{section}
\theoremstyle{plain}
\theoremstyle{plain}
\newtheorem{lemme}{Lemma}[section]
\newtheorem{proposition}{Proposition}[section]
\newtheorem{theoreme}{Theorem}[section]
\newenvironment{hypo}{
    \Needspace*{3\baselineskip}%
    \hypoth
}{\endhypoth}
\theoremstyle{remark}
\newtheorem{remarque}{Remark}[section]
\newtheorem{definition}{Definition}[section]
\newcommand{\R}{\mathbbm{R}}
\newcommand{\N}{\mathbbm{N}}
\newcommand{\J}{\mathrm{J}}
\newcommand{\E}{\mathrm{E}}
\renewcommand{\P}{\mathrm{P}}
\renewcommand{\L}{\mathrm{L}}
\newcommand{\M}{\mathrm{M}}
\newcommand{\trace}{\mathrm{trace}}
\begin{document}

\begin{frontmatter}
\title{Efficient prediction in $\L^2$-differentiable families of distributions}
\runtitle{Efficient prediction in $\L^2$-differentiable families}

\begin{aug}
\author{\fnms{Emmanuel} \snm{Onzon}\ead[label=e1]{emmanuel.onzon@univ-lyon1.fr}},
\runauthor{E. Onzon}

\affiliation{Université Lyon 1}

\address{Bâtiment Polytech Lyon,\\ 15 Boulevard André Latarget,\\
 69100 Villeurbanne, France\\
\printead{e1}\\
\phantom{E-mail:\ }}
\end{aug}

\begin{abstract}\quad
A proof of the Cramér-Rao inequality for prediction is presented under conditions of $\L^2$-differentiability of the family of distributions of the model. The assumptions and the proof differ from those of \cite{miyata2001} who also proved this inequality under $\L^2$-differentiability conditions. It is also proved that an efficient predictor (\emph{i.e.} which risk attains the bound) exists if and only if the family of distributions is of a special form which can be seen as an extension of the notion of exponential family. This result is also proved under $\L^2$-differentiability conditions.
\end{abstract}

\begin{keyword}[class=MSC]
\kwd{62M20}
\kwd{62J02}
\end{keyword}

\begin{keyword}
\kwd{Cramér-Rao inequality}
\kwd{lower bound}
\kwd{prediction}
\kwd{L2 differentiable families}
\end{keyword}

\end{frontmatter}

\section{Introduction}


Statistical prediction relates to the inference of an unobserved random quantity from observations, it is considered here as an extension of point estimation, where the quantity to infer is not necessarily deterministic.
We follow the framework posed by \cite{yatracos1992}. In full generality, the problem of statistical prediction is to estimate a quantity $g(X,Y,\theta)$, we shall say \emph{predict} $g(X,Y,\theta)$, where $X$ is an observed random variable representing the observations, $Y$ an unobserved random variable and $\theta$ the parameter of the model $\{ \P_\theta \, | \, \theta \in \Theta \}$ which the distribution of $(X,Y)$ is supposed to belong to. We shall assume that $g$ takes its values in $\R^k$ and $\Theta \subset \R^d$.
That framework encompasses a wide variety of statistical problems ranging from stochastic processes prediction and time series forecasting (\cite{Johansson1990}, \cite{adke1997}, \cite{bosq2012}, \cite{onzon2013a}) to latent variable models and random effects inference (\cite{nayak2000}, \cite{nayak2003}).
If $p(X)$ is used to predict $g(X,Y,\theta)$ we shall call it a predictor and measure its performance with its mean squared error of prediction which breaks down in the following sum
\[
\E_\theta (p(X) - g(X,Y,\theta))^{\times 2} = \E_\theta (p(X) - r(X,\theta))^{\times 2} + \E_\theta (r(X,\theta) - g(X,Y,\theta))^{\times 2},
\]
with $r(X,\theta) = E_\theta[g(X,Y,\theta) | X]$ and where we use the notation $A^{\times 2} = AA'$ the product of a matrix with its transpose. The second term of the right hand side is incompressible, it does not depend on the choice of the predictor. Hence from now on we are interested in the first term which we call quadratic error of prediction (QEP). More generally, we shall investigate the problem of predicting a quantity $g(X,\theta)$ without refering it is a conditional expectation and call QEP the quantity
\[
R(\theta) = \E_\theta (p(X) - g(X,\theta))^{\times 2}.
\]

A lower bound of Cramér-Rao type has been proved for the QEP with conditions of point differentiability of the family of the densities of the distributions of the model with respect to the parameter and conditions of differentiability under the integral sign (\cite{yatracos1992}, \cite{nayak2002}, \cite{bosq-blanke2007}). The bound has also been proved for conditions of $\L^2$-differentiability of the family of distributions of the model (\cite{miyata2001}, \cite{onzon2012}). In the one-dimensional case ($k=d=1$) and for unbiased predictors it reads
\[
\E_\theta (p(X) - g(X,\theta))^2 \geqslant \frac{ \big( \E_\theta \partial_\theta g(X,\theta) \big)^2 }{ I(\theta) },
\]
where $I(\theta)$ is the Fisher information. We prove this inequality under conditions of $\L^2$-differentiability of the family of distributions of the model in Section~\ref{section_CR_ineg}. The set of assumptions we use here is different from those made by \cite{miyata2001}, for instance there is no reference to the random variable $Y$ in our assumptions while \cite{miyata2001} uses the distribution of the couple $(X,Y)$.

When the mean squared error of an estimator attains the Cramér-Rao bound we say that it is \emph{efficient}. By analogy, an efficient predictor is a predictor which QEP attains the Cramér-Rao bound. In the case of estimation it is proved that there exists an efficient estimator $\delta(X)$ of $\psi(\theta) \in \R^d$ if and only if the family of distributions of the model is exponential, \emph{i.e.} of the form
\[
\frac{\mathrm{d}\P_\theta}{\mathrm{d}\P_{\theta_0}}(x) = \exp \{ A(\theta)'\delta(x) - B(\theta) \},
\]
for some $\theta_0 \in \Theta$, and differentiable functions $A:\Theta\to \R^k$ and $B:\Theta \to \R$, with $(\J_\theta A(\theta))' = I(\theta) (\J_\theta \psi(\theta))^{-1}$ and $\nabla_\theta B(\theta) = (\J_\theta A(\theta))' \psi(\theta)$. The result has been proved under different conditions (\cite{wijsman1973}, \cite{fabian1977}, \cite{muller-funk1989}).

An analogous result for prediction appears in \cite{bosq-blanke2007} in the one-dimensional case and in \cite{onzon2011} in the multidimensional case. In both cases the result is proved under conditions of point differentiability of the family of the densities of the distributions of the model and differentiability under the integral sign. For this result the family is not necessarily exponential but has a form which may be seen as an extension of the notion of exponential family. There exists an efficient predictor $p(X)$ to predict $g(X,\theta) \in \R^d$ if and only if
\[
\frac{\mathrm{d}\P_\theta}{\mathrm{d}\P_{\theta_0}}(x) = \exp \{ A(\theta)'p(x) - B(x,\theta) \},
\]
for some $\theta_0 \in \Theta$, and differentiable functions $A:\Theta\to \R^k$ and $B:\Theta \times E \to \R$, with $(\J_\theta A(\theta))' = I(\theta) (\E_\theta \J_\theta g(X,\theta))^{-1}$ and $\nabla_\theta B(x,\theta) = (\J_\theta A(\theta))' g(x,\theta)$.
Section~\ref{section_pred_eff} presents a proof of this result under $\L^2$-differentiability conditions. The proof is based on the proof of the result for estimation that appears in \cite{muller-funk1989}.

The Appendices gather definitions and results on $\L^2$-differentiability and uniform integrability that are used throughout the paper.

\section{The Cramér-Rao inequality for prediction in $\L^2$-differentiable families}\label{section_CR_ineg}

The following lemma gives a matrix inequality on which the proof of the Cramér-Rao inequality is based.

\begin{lemme}\label{lemme_inegalite_covariance}
Let $T$ and $S$ be random variables taking values in $\R^k$ and $\R^d$ respectively, such that $\E \|T\|_{\R^k}^2 < \infty$ and $\E \|S\|_{\R^d}^2 < \infty$, and such that $\E S^{\times 2}$ is an invertible matrix. Then the following inequality holds,
\begin{equation}\label{equation_ineg_cov}
\E T^{\times 2} \geqslant \E (TS') (\E S^{\times 2})^{-1} \E (ST').
\end{equation}
The equality holds in (\ref{equation_ineg_cov}) iff
\begin{equation}\label{equation_egalite_covariance}
T = \E (TS')(\E S^{\times 2})^{-1}S, \quad \text{a.s.}
\end{equation}
\end{lemme}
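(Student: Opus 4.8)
The natural approach is to form the residual of the $L^2$-projection of $T$ onto the linear span of the coordinates of $S$. Set $M = \E(TS')\,(\E S^{\times 2})^{-1}$, a $k\times d$ matrix (well-defined since $\E S^{\times 2}$ is invertible and the relevant second moments are finite), and consider $U = T - MS$. The idea is that $MS$ plays the role of the orthogonal projection of $T$ onto $S$, so $U$ should be ``uncorrelated'' with $S$ in the matrix sense. First I would check exactly that: $\E(US') = \E(TS') - M\,\E(SS') = \E(TS') - \E(TS')(\E S^{\times 2})^{-1}\E S^{\times 2} = 0$.

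Next I would expand $\E T^{\times 2}$ using $T = U + MS$:
\[
\E T^{\times 2} = \E U^{\times 2} + M\,\E(SU') + \E(US')\,M' + M\,\E(S^{\times 2})\,M'.
\]
The two cross terms vanish by the orthogonality relation $\E(US') = 0$ just established, so
\[
\E T^{\times 2} = \E U^{\times 2} + M\,\E(S^{\times 2})\,M'.
\]
A direct computation shows $M\,\E(S^{\times 2})\,M' = \E(TS')(\E S^{\times 2})^{-1}\E(ST')$, using the symmetry of $\E S^{\times 2}$ and hence of its inverse. Since $\E U^{\times 2} = \E\,UU'$ is positive semidefinite (for any vector $a$, $a'\E U^{\times 2} a = \E (a'U)^2 \geqslant 0$), we conclude $\E T^{\times 2} \geqslant \E(TS')(\E S^{\times 2})^{-1}\E(ST')$, which is (\ref{equation_ineg_cov}).

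For the equality case, the decomposition $\E T^{\times 2} - \E(TS')(\E S^{\times 2})^{-1}\E(ST') = \E U^{\times 2}$ reduces the problem to showing $\E U^{\times 2} = 0$ if and only if $U = 0$ a.s., i.e. if and only if $T = MS$ a.s., which is precisely (\ref{equation_egalite_covariance}). One direction is trivial; for the other, $\E U^{\times 2} = 0$ forces $\E(a'U)^2 = 0$, hence $a'U = 0$ a.s., for every $a$ in a countable dense subset (or just a basis) of $\R^k$, and a union bound over a basis gives $U = 0$ a.s. I do not anticipate a serious obstacle here; the only points requiring a little care are confirming that all the matrix products are well-defined and finite under the stated moment hypotheses (Cauchy--Schwarz handles the mixed moments $\E(TS')$), and being explicit that the invertibility of $\E S^{\times 2}$ is what makes $M$ — and therefore the whole projection argument — legitimate.
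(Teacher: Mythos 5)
Your proposal is correct and follows essentially the same route as the paper: both form the residual $U = T - \E(TS')(\E S^{\times 2})^{-1}S$ and reduce the inequality and its equality case to the positive semidefiniteness and vanishing of $\E U^{\times 2}$. The only cosmetic differences are that you expand $\E T^{\times 2}$ via the orthogonality $\E(US')=0$ where the paper computes $\E U^{\times 2}$ directly, and you conclude $U=0$ a.s.\ from $\E(a'U)^2=0$ over a basis where the paper takes the trace to get $\E\|U\|^2=0$; these are equivalent.
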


\begin{proof}
Let $Z$ be the random vector taking values in $\R^k$ defined as follows
\[
Z = T - \E (TS') (\E S^{\times 2})^{-1}S.
\]
Then its matrix of moment of order $2$ is
\[
\E Z^{\times 2} = \E T^{\times 2} - \E (TS') (\E S^{\times 2})^{-1} \E (ST').
\]
Let $x\in\R^k$, then
\[
x'(\E Z^{\times 2})x = \E (x'Z Z'x) = \E (Z'x)'(Z'x) = \E \| Z'x \|_{\R^d}^2 \geqslant 0.
\]
Hence for all $x\in\R^k$,
\[
x' (\E T^{\times 2} - \E (TS') (\E S^{\times 2})^{-1} \E (ST')) x \geqslant 0.
\]
We deduce (\ref{equation_ineg_cov}).

Suppose the equality holds in (\ref{equation_ineg_cov}). Then $\E Z^{\times 2}=0$, hence $\trace(\E Z^{\times 2})=0$, hence $\E(\trace(Z^{\times 2}))=0$. Yet
\[
\trace(Z^{\times 2})=\trace(Z' Z) = Z' Z = \|Z\|_{\R^k}^2.
\]
Hence $\E \|Z\|_{\R^k}^2 = 0$. Hence $Z = 0$ almost surely. We deduce (\ref{equation_egalite_covariance}).

Suppose (\ref{equation_egalite_covariance}) holds. Then $Z = 0$ almost surely. Hence $\E Z^{\times 2}=0$, the equality in (\ref{equation_ineg_cov}) ensues.
\end{proof}

\begin{remarque}\emph{Geometric interpretation of the matrix inequality}\\
The inequality of Lemma~\ref{lemme_inegalite_covariance} may be interpreted as a Bessel type inequality in the space of random variables with finite moment of order 2. More precisely, consider
\[
\L^2_\P = \{ U \text{ real r.v. } | \, \E U^2 < \infty \},
\]
and the following endomorphism of $\L^2_\P$
\[
P_S : U \mapsto \E (US') (\E S^{\times 2})^{-1} S,
\]
Then one may show that $P_S$ is the orthogonal projection on the space generated by the components of $S$. Indeed, it satisfies $P_S \circ P_S = P_S$, and any component of $S$ is stable by $P_S$, and $P_S$ is self-adjoint, for all $U, V \in L^2_\P$,
\[
\E \left( P_S(U) V \right) = \E \left( U P_S(V) \right).
\]
Then Pythagoras' theorem implies then that for all $U \in \L^2_\P$,
\[
\E U^2 \geqslant \E \left( P_S(U) \right)^2.
\]
We deduce that for all $x\in\R^k$
\[
x' \E T^{\times 2} x = \E (x'T)^2 \geqslant \E \left( P_S(x'T) \right)^2
= x' \E (TS') (\E S^{\times 2})^{-1} \E (ST') x,
\]
with $T$ defined as in Lemma~\ref{lemme_inegalite_covariance}. We deduce the inequality~(\ref{equation_ineg_cov}).

There is equality in (\ref{equation_ineg_cov}) iff for all $x\in\R^k$, $x'T$ is invariant by $P_S$, \emph{i.e.},
\[
\E \left(x'TS'\right) (\E S^{\times 2})^{-1} S = x'T.
\]
We deduce (\ref{equation_egalite_covariance}).
\end{remarque}

\begin{lemme}\label{lem_ineg_CR_L2_diff}
Let $(\mathcal{X},\mathcal{B},\P_\theta, \theta\in\Theta)$ be a model, $\theta_0\in\mathring{\Theta}$, $p(X)$ a predictor of $g(X,\theta)$ taking values in $\R^k$, and $U(\theta_0)$, a neighbourhood of $\theta_0$, which fulfills the following conditions.
\begin{enumerate}
\item The family $(\P_\theta,\theta\in\Theta)$ is $\L^2$-differentiable at $\theta_0$, with derivative $\dot{L}_{\theta_0}$.
\item Fisher information matrix $I(\theta_0)$ is invertible.
\item $\sup_{\theta\in U(\theta_0)} \E_\theta \| p(X) \|_{\R^k}^2 < \infty$.
\end{enumerate}
Then the fonction $\psi : \theta \mapsto \E_\theta p(X)$ is differentiable at $\theta_0$, and the QEP of $p(X)$ at $\theta_0$ satisfies the following inequality.
\begin{equation}\label{equation_ineg_CR_pred_L2_diff}
\E_{\theta_0} (p(X) - g(X,\theta_0))^{\times 2} \geqslant
G(\theta_0) I(\theta_0)^{-1} G(\theta_0)',
\end{equation}
with 
\begin{equation}\label{equation_G_CR_ineg_lemma}
G(\theta_0) = \J_\theta \psi(\theta_0) - \E_\theta g(X,\theta_0) \dot{L}'_{\theta_0},
\end{equation}
The equality holds in (\ref{equation_ineg_CR_pred_L2_diff}) iff
\begin{equation*}
p(X) = g(X,\theta_0) + G(\theta_0) I(\theta_0)^{-1} \dot{L}'_{\theta_0},
\quad \P_{\theta_0}\text{-a.s.}
\end{equation*}
\end{lemme}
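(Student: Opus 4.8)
The argument will rest on the covariance inequality of Lemma~\ref{lemme_inegalite_covariance}, applied to the random vectors
\[
T = p(X) - g(X,\theta_0) \in \R^k, \qquad S = \dot{L}_{\theta_0} \in \R^d,
\]
but two preliminary points must be settled first. The key one is the differentiability of $\psi$. The plan is to show that, as $h \to 0$,
\[
\psi(\theta_0 + h) = \psi(\theta_0) + \E_{\theta_0}\bigl(p(X)\,\dot{L}'_{\theta_0}\bigr)\,h + o(\|h\|),
\]
so that $\psi$ is differentiable at $\theta_0$ with $\J_\theta\psi(\theta_0) = \E_{\theta_0}(p(X)\dot{L}'_{\theta_0})$. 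Condition~3 makes the family $\{p(X)\text{ under }\P_\theta : \theta \in U(\theta_0)\}$ bounded in $\L^2$, hence uniformly integrable; inserting this uniform integrability into the first-order expansion of the likelihood ratio $\d\P_{\theta_0+h}/\d\P_{\theta_0}$ furnished by $\L^2$-differentiability at $\theta_0$ (see the Appendix) justifies passing to the limit under the integral sign and yields the displayed expansion. This interchange of limit and integral is the only genuinely analytic step, and the one I expect to require the most care; everything else is bookkeeping.

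The second preliminary is integrability for Lemma~\ref{lemme_inegalite_covariance}. If the QEP at $\theta_0$ is infinite the bound (\ref{equation_ineg_CR_pred_L2_diff}) holds trivially, so one may assume $\E_{\theta_0}\|p(X) - g(X,\theta_0)\|_{\R^k}^2 < \infty$; combined with condition~3 this also gives $\E_{\theta_0}\|g(X,\theta_0)\|_{\R^k}^2 < \infty$, so that $\E_{\theta_0}\|T\|_{\R^k}^2 < \infty$ and the matrix $G(\theta_0)$ in (\ref{equation_G_CR_ineg_lemma}) is well defined. On the other side, $\L^2$-differentiability gives $\E_{\theta_0}\|S\|_{\R^d}^2 = \trace\, I(\theta_0) < \infty$ together with $\E_{\theta_0}S^{\times 2} = I(\theta_0)$, which is invertible by condition~2. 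Hence $T$ and $S$ meet the hypotheses of Lemma~\ref{lemme_inegalite_covariance}.

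It remains to identify the cross term and conclude. From the expansion of $\psi$ above,
\[
\E_{\theta_0}(TS') = \E_{\theta_0}\bigl(p(X)\dot{L}'_{\theta_0}\bigr) - \E_{\theta_0}\bigl(g(X,\theta_0)\dot{L}'_{\theta_0}\bigr) = \J_\theta\psi(\theta_0) - \E_{\theta_0}\bigl(g(X,\theta_0)\dot{L}'_{\theta_0}\bigr) = G(\theta_0),
\]
the last equality being the definition (\ref{equation_G_CR_ineg_lemma}). Substituting $\E_{\theta_0}(TS') = G(\theta_0)$ and $\E_{\theta_0}S^{\times 2} = I(\theta_0)$ into (\ref{equation_ineg_cov}) gives exactly (\ref{equation_ineg_CR_pred_L2_diff}). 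For the equality case, the characterisation (\ref{equation_egalite_covariance}) of Lemma~\ref{lemme_inegalite_covariance} states that equality holds if and only if $T = \E_{\theta_0}(TS')(\E_{\theta_0}S^{\times 2})^{-1}S$ almost surely; substituting the quantities just computed turns this into the identity for $p(X)$ displayed in the statement.
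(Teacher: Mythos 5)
Your proof is correct and follows essentially the same route as the paper: the same choice of $T = p(X) - g(X,\theta_0)$ and $S = \dot{L}_{\theta_0}$ fed into Lemma~\ref{lemme_inegalite_covariance}, with the cross term identified through the differentiability of $\psi$. The one analytic step you flag as delicate --- differentiating $\theta \mapsto \E_\theta p(X)$ under condition~3 and getting $\J_\theta\psi(\theta_0) = \E_{\theta_0}(p(X)\dot{L}'_{\theta_0})$ --- is exactly Proposition~\ref{prop_1_111_Liese} of the Appendix and can simply be cited (this is what the paper does); your explicit handling of the trivial case where the QEP is infinite is a small point of care the paper omits.
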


The symbol $\J_\theta$ denotes the jacobian matrix operator.

\begin{proof}
We set $S = \dot{L}_{\theta_0}$ and $T = p(X) - g(X,\theta_0)$. Using Proposition~\ref{prop_1_111_Liese} (applied to $\delta = p(X)$) one obtains
\[
\E_{\theta_0} (TS') = \E_{\theta_0}  (p(X) - g(X,\theta_0)) \dot{L}'_{\theta_0}
= \J_\theta \psi(\theta_0) - \E_{\theta_0} g(X,\theta_0) \dot{L}'_{\theta_0}.
\]
Then the result follows from Lemma~\ref{lemme_inegalite_covariance}.
\end{proof}

Lemma~\ref{lem_ineg_CR_L2_diff} gives a matrix inequality of Cramér-Rao type for predictors. However under some conditions, the matrix $G(\theta_0)$ that appears in the right hand side of the inequality has the following simpler form 
\begin{equation}
G(\theta_0) = \E_\theta \J_\theta g(X,\theta_0),
\end{equation}
instead of the form (\ref{equation_G_CR_ineg_lemma}).
We now proceed to obtain those conditions in the setup of $\L^2$-differentiable families of distributions.

\begin{proposition}\label{prop_deriv_g_L2_diff}
Let $(\mathcal{X},\mathcal{B},\P_\theta, \theta\in\Theta)$ be a model and  $\theta_0\in\mathring{\Theta}$, such that $(\P_\theta, \theta\in\Theta)$ is $\L^2$-differentiable at $\theta_0$. Let $g : \mathcal{X} \times \Theta \to \R^k$ be a function such that for all $\theta\in\Theta$, $g(\cdot,\theta)$ is measurable. Suppose there is $U(\theta_0)$, a neighbourhood of $\theta_0$, such that the following conditions hold.
\begin{enumerate}
\item For all $\theta$, $\theta' \in U(\theta_0)$, $g(X,\cdot)$ is $\P_\theta$-almost surely differentiable at $\theta'$ and \[
\sup_{(\theta,\theta') \in U(\theta_0)^2} \E_\theta \| \J_\theta g(X,\theta') \|_{\M_{k,d}}^2 < \infty.
\]
\item $\sup_{(\theta,\theta') \in U(\theta_0)^2} \E_\theta L_{\theta, \theta'}^2 < \infty$
\end{enumerate}
Then
\[
\J_\theta \E_{\theta_0} g(X,\theta_0) = \E_{\theta_0} \J_\theta g(X,\theta_0) + \E_{\theta_0} g(X,\theta_0) \dot{L}'_{\theta_0}.
\]
\end{proposition}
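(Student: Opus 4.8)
The plan is to decompose $\phi(\theta):=\E_\theta g(X,\theta)$ as $\phi_0(\theta)+q(\theta)$, where $\phi_0(\theta):=\E_\theta g(X,\theta_0)$ and $q(\theta):=\E_\theta(g(X,\theta)-g(X,\theta_0))$, and to prove separately that each is differentiable at $\theta_0$, with $\J\phi_0(\theta_0)=\E_{\theta_0}g(X,\theta_0)\dot{L}'_{\theta_0}$ and $\J q(\theta_0)=\E_{\theta_0}\J_\theta g(X,\theta_0)$; adding these gives the asserted identity, the left-hand side $\J_\theta\E_{\theta_0}g(X,\theta_0)$ being read as the Jacobian at $\theta_0$ of $\theta\mapsto\E_\theta g(X,\theta)$. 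All the expectations that occur are finite: $\J_\theta g(X,\theta_0)\in\L^2(\P_{\theta_0})$ by Condition~1, $\dot{L}_{\theta_0}\in\L^2(\P_{\theta_0})$ by $\L^2$-differentiability, and $g(X,\theta_0)\in\L^2(\P_{\theta_0})$ (without which the right-hand side would not be meaningful), so $g(X,\theta_0)\dot{L}'_{\theta_0}\in\L^1(\P_{\theta_0})$ by Cauchy--Schwarz.

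For $\phi_0$, $g(X,\theta_0)$ is a fixed statistic, so I would simply invoke Proposition~\ref{prop_1_111_Liese} with $\delta=g(X,\theta_0)$, exactly as in the proof of Lemma~\ref{lem_ineg_CR_L2_diff}; its hypotheses follow from Conditions~1 and~2. For $q$, I would use $q(\theta_0)=0$ and, for $h$ small enough that $[\theta_0,\theta_0+h]\subset U(\theta_0)$, the splitting
\[
q(\theta_0+h)=\E_{\theta_0}\bigl(g(X,\theta_0+h)-g(X,\theta_0)\bigr)+\E_{\theta_0}\bigl((L_{\theta_0,\theta_0+h}-1)(g(X,\theta_0+h)-g(X,\theta_0))\bigr).
\]
The first summand I would handle through the $\P_{\theta_0}$-a.s.\ differentiability of $g(X,\cdot)$ at $\theta_0$ together with a uniform-integrability argument: since $g(X,\cdot)$ is differentiable at every point of the segment, one has $g(X,\theta_0+h)-g(X,\theta_0)=\int_0^1\J_\theta g(X,\theta_0+th)h\,\d t$, so by Jensen's inequality
\[
\E_{\theta_0}\Bigl\|\frac{g(X,\theta_0+h)-g(X,\theta_0)}{\|h\|}\Bigr\|^2\leqslant\sup_{\theta'\in U(\theta_0)}\E_{\theta_0}\|\J_\theta g(X,\theta')\|_{\M_{k,d}}^2<\infty
\]
by Condition~1; the difference quotients being bounded in $\L^2(\P_{\theta_0})$, they are uniformly integrable, and one may pass to the limit---first along each fixed direction, then upgrading to full differentiability by means of the Lipschitz estimate the same bound yields---to obtain $\E_{\theta_0}(g(X,\theta_0+h)-g(X,\theta_0))=\E_{\theta_0}\J_\theta g(X,\theta_0)\,h+o(\|h\|)$.

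The second summand, the cross term, is where I expect the real difficulty to be. Heuristically both factors are of order $\|h\|$, so their product should be $o(\|h\|)$; to make this rigorous I would write $g(X,\theta_0+h)-g(X,\theta_0)=\J_\theta g(X,\theta_0)h+r(h)$, where $r(h)/\|h\|\to0$ $\P_{\theta_0}$-a.s.\ and, by the Jensen bound above, $\{r(h)/\|h\|\}$ is bounded in $\L^2(\P_{\theta_0})$, and I would use that $\L^2$-differentiability supplies the expansion $L_{\theta_0,\theta_0+h}=1+h'\dot{L}_{\theta_0}+o(\|h\|)$ in $\L^1(\P_{\theta_0})$ while Condition~2 gives $\sup_{h}\|L_{\theta_0,\theta_0+h}-1\|_{\L^2(\P_{\theta_0})}<\infty$. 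The part of the cross term paired with $\J_\theta g(X,\theta_0)h$ is then controlled by truncating the fixed $\L^2(\P_{\theta_0})$-variable $\J_\theta g(X,\theta_0)$, exactly as in the proof of Proposition~\ref{prop_1_111_Liese}; the part paired with $r(h)$ is where Condition~2 is indispensable, the $\L^2$-bound on the likelihood ratios providing the uniform integrability that licenses interchanging limit and expectation even though the reference measure $\P_{\theta_0+h}$ itself moves. Once the cross term has been shown to be $o(\|h\|)$, one obtains $\J q(\theta_0)=\E_{\theta_0}\J_\theta g(X,\theta_0)$, and adding $\J\phi_0(\theta_0)=\E_{\theta_0}g(X,\theta_0)\dot{L}'_{\theta_0}$ completes the proof. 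This uniform-integrability bookkeeping in the cross term is the one step I expect to require genuine care; everything else is routine given the tools already set up in the paper and its appendices.
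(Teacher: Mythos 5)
Your overall route coincides with the paper's: you split $\E_{\theta_0+h}g(X,\theta_0+h)-\E_{\theta_0}g(X,\theta_0)$ into a part where only the measure moves, handled by Proposition~\ref{prop_1_111_Liese} with $\delta=g(X,\theta_0)$, and a part where only the argument moves, $\E_{\theta_0+h}\bigl(g(X,\theta_0+h)-g(X,\theta_0)\bigr)=\E_{\theta_0}\bigl[L_{\theta_0}(h)\bigl(g(X,\theta_0+h)-g(X,\theta_0)\bigr)\bigr]$; this is exactly the paper's decomposition of $\Delta_n$. Your treatment of the first summand (difference quotients bounded in $\L^2(\P_{\theta_0})$, hence uniformly integrable, plus a.s.\ convergence) and of the piece $(L_{\theta_0}(h)-1)\,\J_\theta g(X,\theta_0)h$ (truncation of a fixed $\L^2$ variable, using Condition~2) is sound.

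The gap is in the remaining piece $\|h\|^{-1}\E_{\theta_0}\bigl[(L_{\theta_0}(h)-1)\,r(h)\bigr]$, which you yourself identify as the hard step but then dispatch by asserting that the $\L^2$-bound of Condition~2 on the likelihood ratios ``provides the uniform integrability''. It does not: both $L_{\theta_0}(h)-1$ and $r(h)/\|h\|$ tend to $0$ in $\P_{\theta_0}$-probability and are bounded in $\L^2(\P_{\theta_0})$, but a product of two merely $\L^2$-bounded families, each tending to $0$ in probability, need not be uniformly integrable nor have vanishing expectation (take $A_h=B_h=\|h\|^{-1}\mathbbm{1}_{E_h}$ with $\P(E_h)=\|h\|^2$: each is $\L^2$-bounded with norm $1$ and tends to $0$ in probability, yet $\E A_hB_h=1$ for all $h$). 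The device that actually closes this step --- and is the heart of the paper's proof --- is to put half of the likelihood ratio on each factor: the paper writes $L_{\theta_0}(u_n)\bigl(\text{difference quotient}-a'\nabla_\theta g(X,\theta_0)\bigr)=Z_nY_n$ with $Z_n=L_{\theta_0}(u_n)^{1/2}\to1$ in $\L^2(\P_{\theta_0})$ and $Y_n=L_{\theta_0}(u_n)^{1/2}(\cdots)$, whose second moment becomes an expectation under the \emph{moving} measure $\P_{\theta_0+u_n}$ and is therefore bounded precisely by the double supremum in Condition~1 together with the mean value theorem; Lemma~\ref{lem_prob_1_109_Liese} then yields uniform integrability of the product. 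Equivalently, in your notation one can estimate $\E_{\theta_0}\bigl|(L-1)r(h)\bigr|/\|h\|\leqslant\bigl(\E_{\theta_0}|L-1|\bigr)^{1/2}\bigl(\E_{\theta_0}(L+1)(r(h)/\|h\|)^2\bigr)^{1/2}$, where the second factor equals $\bigl(\E_{\theta_0+h}+\E_{\theta_0}\bigr)(r(h)/\|h\|)^2$ and is bounded by Condition~1, while $\E_{\theta_0}|L-1|\to0$ by $\L^2$-differentiability. Either way, the essential ingredient is converting the second moment of the varying factor into an expectation under $\P_{\theta_0+h}$; without supplying this explicitly the step does not go through.
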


\begin{proof}
We first assume that $k=1$, \emph{i.e.} that $g:\mathcal{X}\times\Theta\to\mathbb{R}$.
We prove that
\[
\nabla_\theta \E_{\theta_0} g(X,\theta_0) = \E_{\theta_0} g(X,\theta_0) \dot{L}_{\theta_0} + \E_{\theta_0} \nabla_\theta g(X,\theta_0).
\]

Let $a\in\R^d$, such that $\| a \| = 1$ and $(u_n, n\in\N)$ a sequence in $\R^d$ such that
\[
u_n \underset{n\to\infty}{\longrightarrow} 0,
\quad\text{and}\quad
\frac{u_n}{\|u_n\|} \underset{n\rightarrow \infty}{\longrightarrow} a.
\]
We set
\begin{align*}
\Delta_n & = \|u_n\|^{-1} \Big( \E_{\theta_0+u_n} g(X,\theta_0+u_n) - \E_{\theta_0}g(X,\theta_0) \Big)\\
& \quad
-a' \, \E_{\theta_0} \Big( g(X,\theta_0)\dot{L}_{\theta_0} + \nabla_\theta g(X,\theta_0) \Big).
\end{align*}
We prove that $\Delta_n \underset{n\rightarrow \infty}{\longrightarrow} 0$.
\begin{align*}
\Delta_n
& = \|u_n\|^{-1} \Big( \E_{\theta_0+u_n} g(X,\theta_0) - \E_{\theta_0} g(X,\theta_0) \Big) - \E_{\theta_0} \Big( a' \dot{L}_{\theta_0} g(X,\theta_0) \Big) \\
& \quad + \E_{\theta_0} \Big( \|u_n\|^{-1} L_{\theta_0}(u_n) \big( g(X,\theta_0 + u_n) - g(X,\theta_0) \big) \, - \, a'\nabla_\theta g(X,\theta_0)  \Big).
\end{align*}
From Proposition~\ref{prop_1_111_Liese} the following convergence holds (taking $\delta=g(X,\theta_0)$).
\[
\|u_n\|^{-1} \Big( \E_{\theta_0+u_n} g(X,\theta_0) - \E_{\theta_0} g(X,\theta_0) \Big) - \E_{\theta_0} \Big( a' \dot{L}_{\theta_0} g(X,\theta_0) \Big)
\underset{n\rightarrow \infty}{\longrightarrow} 0
\]
We set
\[
\tilde{\Delta}_n
= \|u_n\|^{-1} L_{\theta_0}(u_n) \big( g(X,\theta_0 + u_n) - g(X,\theta_0) \big) \, - \, a'\nabla_\theta g(X,\theta_0).
\]
To complete the proof it remains to prove that $\E_{\theta_0} \tilde{\Delta}_n \underset{n\rightarrow \infty}{\longrightarrow} 0$.
\begin{align*}
\tilde{\Delta}_n & = L_{\theta_0}(u_n) \Big( \|u_n\|^{-1} \big( g(X,\theta_0+u_n) - g(X,\theta_0) \big) \, - \, a'\nabla_\theta g(X,\theta_0) \Big)\\
& \quad + a'\nabla_\theta g(X,\theta_0) \big(L_{\theta_0}(u_n)-1\big).
\end{align*}

Let $U_n = a'\nabla_\theta g(X,\theta_0) \big( L_{\theta_0}(u_n) - 1 \big)$ then,
\[
\E_{\theta_0} |U_n| \leqslant
\E_{\theta_0} | a'\nabla_\theta g(X,\theta_0) | + \E_{\theta_0+u_n} | a' \nabla_\theta g(X,\theta_0) |.
\]
Yet for all $n$ large enough, $\theta_0+u_n\in U(\theta_0)$. Hence $(U_n)_{n\in\mathbb{N}}$ is bounded in $\L^1$. We prove that $(U_n, n\in\N)$ is uniformly integrable. Let $A\in\mathcal{B}$ then
\begin{align*}
\sup_{n\in\N} \E_{\theta_0} | \mathbbm{1}_A U_n | & \leqslant
 \, \P_{\theta_0}(A) \, \big( \E_{\theta_0} \| \nabla_\theta g(X,\theta_0) \|^2 \big)^{1/2}\\
& \quad +  \, \P_{\theta_0 + u_n}(A) \, \big( \E_{\theta_0+u_n} \|\nabla_\theta g(X,\theta_0) \|^2 \big)^{1/2},
\end{align*}
and
\[
\P_{\theta_0+u_n}(A) \leqslant \P_{\theta_0}(A) \, \big( \E_{\theta_0}(L_{\theta_0}(u_n) )^2 \big)^{1/2}.
\]
Hence the sequence $(U_n)_{n\in\mathbb{N}}$ is equicontinuous. Hence it is uniformly integrable. Yet
\begin{equation}\label{equa_L_conv_1}
L_{\theta_0}(u_n) \xrightarrow[n\to\infty]{\P_{\theta_0}} 1,
\end{equation}
from Lemma~\ref{lem_1_106_Liese}. Hence
$U_n \xrightarrow[n\to\infty]{\P_{\theta_0}} 0$. We deduce
\[
\E_{\theta_0} a'\nabla_\theta g(X,\theta_0) \big( L_{\theta_0}(u_n) - 1 \big)
\underset{n\rightarrow \infty}{\longrightarrow} 0.
\]

Moreover
\[
\|u_n\|^{-1} \big( g(X,\theta_0 + u_n) - g(X,\theta_0) \big) - a' \nabla_\theta g(X,\theta_0)
\xrightarrow[n\to\infty]{\P_{\theta_0}} 0.
\]
Combining with (\ref{equa_L_conv_1}) one obtains
\[
L_{\theta_0}(u_n) \Big( \|u_n\|^{-1} \big( g(X,\theta_0 + u_n) - g(X,\theta_0) \big) - a'\nabla_\theta g(X,\theta_0) \Big)
\xrightarrow[n\to\infty]{\P_{\theta_0}} 0.
\]
Lemma~\ref{lem_prob_1_109_Liese} will allow to prove uniform integrability of
\[
L_{\theta_0}(u_n) \Big( \|u_n\|^{-1} \big( g(X,\theta_0 + u_n) - g(X,\theta_0) \big) - a'\nabla_\theta g(X,\theta_0) \Big)\\
= Z_n Y_n.
\]
With
\begin{align*}
Z_n & = L_{\theta_0}(u_n)^{1/2}\\
Y_n & =
L_{\theta_0}(u_n)^{1/2} \Big( \|u_n\|^{-1} \big( g(X,\theta_0 + u_n) - g(X,\theta_0) \big) - a'\nabla_\theta g(X,\theta_0) \Big).
\end{align*}
From Lemma~\ref{lem_1_106_Liese}, $Z_n$ satisfies $\E_{\theta_0} \big( Z_n - 1 \big)^2 \longrightarrow 0$. For all $n$ there is a random variable
$\theta_n = \theta_0 + \lambda_n u_n$,
with $\lambda_n \in [0,1]$ such that
\[
\| u_n \|^{-1} \left( g(X,\theta_0+u_n) - g(X,\theta_0) \right) = \| u_n \|^{-1} u'_n \nabla_\theta g(X,\theta_n).
\]
Hence
\[
\| u_n \|^{-2} \big( g(X,\theta_0 + u_n) - g(X,\theta_0) \big)^2 \leqslant
\| \nabla_\theta g(X,\theta_n) \|^2.
\]
For $n$ large enough, $\theta_n\in U(\theta_0)$, and then
\begin{align*}
\E_{\theta_0} L_{\theta_0}(u_n)
\| u_n \|^{-2} \big( g(X,\theta_0 + u_n) - g(X,\theta_0) \big)^2
& \leqslant \E_{\theta_0 + u_n} \| \nabla_\theta g(X,\theta_n) \|^2\\
& \leqslant \sup_{(\theta,\theta') \in U(\theta_0)^2}\E_\theta \|\nabla_\theta g(X,\theta') \|^2.
\end{align*}
Moreover
\[
\E_{\theta_0} \big( L_{\theta_0}(u_n)^{1/2} a'\nabla_\theta g(X,\theta_0) \big)^2
\leqslant  \sup_{(\theta,\theta') \in U(\theta_0)^2} \E_\theta \|\nabla_\theta g(X,\theta') \|^2.
\]
Hence $\E_{\theta_0} Y_n^2 < \infty$. We deduce that $(Z_n Y_n, n \in \N)$ is uniformly integrable and hence
\[
\E_{\theta_0} L_{\theta_0}(u_n) \Big( \|u_n\|^{-1} \big( g(X,\theta_0 + u_n) - g(X,\theta_0) \big) - a'\nabla_\theta g(X,\theta_0) \Big) \underset{n\rightarrow \infty}{\longrightarrow} 0.
\]
We deduce
\[
\nabla_\theta \E_{\theta_0} g(X,\theta_0) = \E_{\theta_0} g(X,\theta_0) \dot{L}_{\theta_0} + \E_{\theta_0} \nabla_\theta g(X,\theta_0).
\]

The case $k > 1$ is deduced from the case $k=1$ by reasoning componentwise.
\end{proof}

\begin{hypo}\label{hypo_CR_pred_L2_diff}
Consider a model $(\mathcal{X},\mathcal{B},\P_\theta, \theta\in\Theta)$, $\theta_0\in\mathring{\Theta}$, a neighbourhood $U(\theta_0)$ of $\theta_0$ and a function $g : \mathcal{X}\times \Theta \to \R^k$, with $g(\cdot,\theta)$ measurable for all $\theta\in\Theta$, such that the following conditions hold.
\begin{enumerate}
\item\label{cond_famille_L2_diff} The family $(\P_\theta,\theta\in\Theta)$ is $\L^2$-differentiable at $\theta_0$, with derivative $\dot{L}_{\theta_0}$.
\item Fisher matrix information $I(\theta_0)$ is invertible.
\item\label{cond_sup_J_g} For all $\theta$, $\theta' \in U(\theta_0)$, $g(X,\cdot)$ is $\P_\theta$-almost surely differentiable at $\theta'$ and \[
\sup_{(\theta,\theta') \in U(\theta_0)^2} \E_\theta \| \J_\theta g(X,\theta') \|_{\M_{k,d}}^2 < \infty.
\]
\item\label{cond_sup_L2} $\sup_{(\theta,\theta') \in U(\theta_0)^2} \E_\theta L_{\theta, \theta'}^2 < \infty$
\end{enumerate}
Moreover consider a predictor $p(X)$ taking values in $\R^k$. There is $U(\theta_0)$, a neighbourhood of $\theta_0$, such that
\begin{enumerate}\setcounter{enumi}{4}
\item\label{cond_sup_p} $\sup_{\theta\in U(\theta_0)} \E_\theta \| p(X) \|_{\R^k}^2 < \infty$.
\end{enumerate}
\end{hypo}

We first state the inequality for unbiased predictors. Here we say that $p(X)$ a predictor of $g(X,\theta)$ is an \emph{unbiased predictor} if $\E_\theta (p(X)) = \E_\theta (g(X,\theta))$ for all $\theta \in \Theta$ (for other concepts of risk unbiasedness pertaining to prediction problems see \cite{nayak2010}). 

\begin{theoreme}\label{theo_CR_pred_unbiased}
Let $(\mathcal{X},\mathcal{B},\P_\theta, \theta\in\Theta)$ be a model, $\theta_0\in\mathring{\Theta}$, and $p(X)$ an unbiased predictor of $g(X,\theta)$ taking values in $\R^k$, that satisfies Assumption~\ref{hypo_CR_pred_L2_diff}.

Then the QEP of $p(X)$ at $\theta_0$ satisfies the following inequality.
\begin{equation}\label{equa_theo_ineg_CR_pred_L2_diff}
\E_{\theta_0} (p(X) - g(X,\theta_0))^{\times 2} \geqslant
G(\theta_0) I(\theta_0)^{-1} G(\theta_0)',
\end{equation}
with $G(\theta) =  \E_\theta\J_\theta g(X,\theta)$.
The equality holds in (\ref{equa_theo_ineg_CR_pred_L2_diff}) iff
\begin{equation*}
p(X) = g(X,\theta_0) + G(\theta_0) I(\theta_0)^{-1} \dot{L}'_{\theta_0},
\quad \P_{\theta_0}\text{-a.s.}
\end{equation*}
\end{theoreme}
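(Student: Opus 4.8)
The plan is to obtain this theorem as an immediate consequence of Lemma~\ref{lem_ineg_CR_L2_diff} and Proposition~\ref{prop_deriv_g_L2_diff}, the only extra ingredient being unbiasedness, which lets us rewrite the matrix $G(\theta_0)$ in the simpler form. First I would check that the hypotheses of Lemma~\ref{lem_ineg_CR_L2_diff} are met: conditions \ref{cond_famille_L2_diff} and the invertibility of $I(\theta_0)$ in Assumption~\ref{hypo_CR_pred_L2_diff} give its items~1 and~2, and condition~\ref{cond_sup_p} gives its item~3 (after intersecting the two neighbourhoods named $U(\theta_0)$ in the Assumption, which one may do without loss of generality). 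Applying the lemma therefore yields that $\psi:\theta\mapsto\E_\theta p(X)$ is differentiable at $\theta_0$, the inequality
\[
\E_{\theta_0}(p(X)-g(X,\theta_0))^{\times 2}\geqslant G(\theta_0)I(\theta_0)^{-1}G(\theta_0)',
\]
with $G(\theta_0)=\J_\theta\psi(\theta_0)-\E_{\theta_0}g(X,\theta_0)\dot L'_{\theta_0}$, and the stated characterization of the equality case.

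Next I would use unbiasedness. By definition $\E_\theta p(X)=\E_\theta g(X,\theta)$ for every $\theta\in\Theta$, so the function $\psi$ coincides on a neighbourhood of $\theta_0$ with $\theta\mapsto\E_\theta g(X,\theta)$; consequently $\J_\theta\psi(\theta_0)=\J_\theta\E_{\theta_0}g(X,\theta_0)$, the Jacobian at $\theta_0$ of the latter map (whose existence is guaranteed by Lemma~\ref{lem_ineg_CR_L2_diff} and, independently, by Proposition~\ref{prop_deriv_g_L2_diff}). Then I would invoke Proposition~\ref{prop_deriv_g_L2_diff}, whose hypotheses are exactly conditions \ref{cond_famille_L2_diff}, \ref{cond_sup_J_g} and \ref{cond_sup_L2} of Assumption~\ref{hypo_CR_pred_L2_diff}, to get
\[
\J_\theta\E_{\theta_0}g(X,\theta_0)=\E_{\theta_0}\J_\theta g(X,\theta_0)+\E_{\theta_0}g(X,\theta_0)\dot L'_{\theta_0}.
\]
Substituting this into the expression for $G(\theta_0)$, the term $\E_{\theta_0}g(X,\theta_0)\dot L'_{\theta_0}$ cancels and leaves $G(\theta_0)=\E_{\theta_0}\J_\theta g(X,\theta_0)$, which is the announced form $G(\theta)=\E_\theta\J_\theta g(X,\theta)$ evaluated at $\theta_0$.

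Finally, plugging this identity for $G(\theta_0)$ back into the inequality and the equality condition coming from Lemma~\ref{lem_ineg_CR_L2_diff} gives \eqref{equa_theo_ineg_CR_pred_L2_diff} and the characterization
\[
p(X)=g(X,\theta_0)+G(\theta_0)I(\theta_0)^{-1}\dot L'_{\theta_0},\quad\P_{\theta_0}\text{-a.s.}
\]
I do not anticipate a genuine obstacle here: the proof is essentially bookkeeping, gluing the two earlier results together. The one point deserving a line of care is the compatibility of neighbourhoods and the fact that unbiasedness is assumed on all of $\Theta$ (not merely at $\theta_0$), so that differentiating the identity $\E_\theta p(X)=\E_\theta g(X,\theta)$ at $\theta_0$ is legitimate; both are handled by the remarks above.
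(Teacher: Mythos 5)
Your proposal is correct and follows essentially the same route as the paper: apply Lemma~\ref{lem_ineg_CR_L2_diff}, use unbiasedness to identify $\J_\theta\psi(\theta_0)$ with $\J_\theta\E_{\theta_0}g(X,\theta_0)$, and then invoke Proposition~\ref{prop_deriv_g_L2_diff} to cancel the term $\E_{\theta_0}g(X,\theta_0)\dot L'_{\theta_0}$ and obtain $G(\theta_0)=\E_{\theta_0}\J_\theta g(X,\theta_0)$. Your added care about matching the neighbourhoods and about unbiasedness holding on all of $\Theta$ is sound but not a departure from the paper's argument.
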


\begin{proof}
The result follows from Lemma~\ref{lem_ineg_CR_L2_diff} with
\[
G(\theta_0) = \J_\theta \psi(\theta_0) - \E_{\theta_0} g(X,\theta_0) \dot{L}'_{\theta_0} = \J_\theta \E_{\theta_0} g(X,\theta_0) - \E_{\theta_0} g(X,\theta_0) \dot{L}'_{\theta_0},
\]
where $\J_\theta \psi(\theta_0) = \J_\theta \E_{\theta_0} g(X,\theta_0)$ because $p(X)$ is assumed unbiased.
Yet Proposition~\ref{prop_deriv_g_L2_diff} gives
\[
\J_\theta \E_{\theta_0} g(X,\theta_0) = \E_{\theta_0} \J_\theta g(X,\theta_0) + \E_{\theta_0} g(X,\theta_0) \dot{L}'_{\theta_0}.
\]
We deduce $G(\theta_0)  =  \E_{\theta_0}\J_\theta g(X,\theta_0)$.
\end{proof}

\begin{remarque}
The following assumptions are used by \cite{miyata2001} to prove (\ref{equa_theo_ineg_CR_pred_L2_diff}).
\begin{itemize}
\item The family $\left( P_\theta^{(X,Y)}, \, \theta\in\Theta \right)$ (of distributions of the couple $(X,Y)$) is $\L^2$-differentiable.
\item Fisher information matrix is invertible.
\item $\E_{\theta_0} g(X,\theta)^2$ is bounded for all $\theta$ in a neighbourhood of all fixed $\theta_0 \in \Theta$.
\item The predictor $p(X)$ is unbiased, $\E_\theta p(X)^2 < \infty$, and $\E_\theta Y^2 < \infty$.
\end{itemize}
It is interesting to remark that these assumptions refer to the variable $Y$, while in our approach the variable $Y$ only comes up through the conditional expectation $r(X,\theta) = \E_\theta[g(X,Y,\theta)|X]$ and then it is not refered to anymore.
\end{remarque}

\begin{theoreme}
Let $(\mathcal{X},\mathcal{B},\P_\theta, \theta\in\Theta)$ be a model and $\theta_0\in\mathring{\Theta}$. Let $r:\mathcal{X}\times\Theta\to\R^k$ such that, $\P_{\theta_0}$-almost surely, $\theta \mapsto r(X,\theta)$ is differentiable at $\theta_0$, and for all $\theta$, the function $x \mapsto r(x,\theta)$ is measurable. Let $p(X)$ be a predictor of $r(X,\theta)$ with bias $b(\theta)$.

Suppose $(\mathcal{X},\mathcal{B},\P_\theta, \theta\in\Theta)$, $\theta_0$, $p(X)$, and $g(X,\theta) = r(X,\theta) + b(\theta)$, satisfy Assumption~\ref{hypo_CR_pred_L2_diff}.

Then $\theta \mapsto b(\theta)$ is differentiable at $\theta_0$, $\theta \mapsto r(X,\theta)$ is $\P_{\theta_0}$-almost surely differentiable at $\theta_0$, and the QEP of $p(X)$ at $\theta_0$ satisfies the following inequality.
\begin{equation}\label{equa_theo_ineg_CR_biais_pred_L2_diff}
\E_{\theta_0} (p(X) - r(X,\theta_0))^{\times 2} \geqslant
b(\theta_0)^{\times 2} + G(\theta_0) I(\theta_0)^{-1} G(\theta_0)',
\end{equation}
with $G(\theta_0) = \E_{\theta_0} \J_\theta r(X,\theta_0) + \J_\theta b(\theta_0)$.
The equality holds in (\ref{equa_theo_ineg_CR_biais_pred_L2_diff}) iff
\begin{equation*}
p(X) = b(\theta_0) + r(X,\theta_0) + G(\theta_0) I(\theta_0)^{-1} \dot{L}'_{\theta_0},
\quad \P_{\theta_0}\text{-a.s.}
\end{equation*}
\end{theoreme}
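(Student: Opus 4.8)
The plan is to reduce the biased case to the unbiased case already established in Theorem~\ref{theo_CR_pred_unbiased}. The starting observation is that, by the very definition of the bias, $\E_\theta p(X) = \E_\theta r(X,\theta) + b(\theta) = \E_\theta g(X,\theta)$ for the function $g(X,\theta) = r(X,\theta) + b(\theta)$, so that $p(X)$ is an \emph{unbiased} predictor of $g(X,\theta)$. Since by hypothesis the model, $\theta_0$, $p(X)$ and $g$ satisfy Assumption~\ref{hypo_CR_pred_L2_diff}, Theorem~\ref{theo_CR_pred_unbiased} applies and yields
\[
\E_{\theta_0}(p(X)-g(X,\theta_0))^{\times 2} \geqslant G(\theta_0)\,I(\theta_0)^{-1}\,G(\theta_0)', \qquad G(\theta_0) = \E_{\theta_0}\J_\theta g(X,\theta_0),
\]
with equality if and only if $p(X) = g(X,\theta_0) + G(\theta_0)I(\theta_0)^{-1}\dot{L}'_{\theta_0}$, $\P_{\theta_0}$-a.s. (The a.s.\ differentiability of $\theta\mapsto r(X,\theta)$ at $\theta_0$ asserted in the conclusion is part of the hypotheses, so nothing is to be proved there.)

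Next I would pass from the QEP of $p(X)$ as a predictor of $g$ to its QEP as a predictor of $r$ by a bias decomposition. Setting $W = p(X) - r(X,\theta_0)$, one has $p(X) - g(X,\theta_0) = W - b(\theta_0)$ and $\E_{\theta_0} W = b(\theta_0)$; expanding $(W - b(\theta_0))^{\times 2} = W^{\times 2} - W b(\theta_0)' - b(\theta_0) W' + b(\theta_0)^{\times 2}$ and taking $\E_{\theta_0}$, the three $b(\theta_0)$-dependent terms collapse to $-b(\theta_0)^{\times 2}$, which gives
\[
\E_{\theta_0}(p(X)-r(X,\theta_0))^{\times 2} = \E_{\theta_0}(p(X)-g(X,\theta_0))^{\times 2} + b(\theta_0)^{\times 2}.
\]
Inserting the bound of the first step yields (\ref{equa_theo_ineg_CR_biais_pred_L2_diff}). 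Since $b(\theta_0)^{\times 2}$ is added on both sides, equality in (\ref{equa_theo_ineg_CR_biais_pred_L2_diff}) holds if and only if equality holds in the bound of Theorem~\ref{theo_CR_pred_unbiased}, i.e.\ iff $p(X) = g(X,\theta_0) + G(\theta_0)I(\theta_0)^{-1}\dot{L}'_{\theta_0} = b(\theta_0) + r(X,\theta_0) + G(\theta_0)I(\theta_0)^{-1}\dot{L}'_{\theta_0}$, $\P_{\theta_0}$-a.s.

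It remains to rewrite $G(\theta_0)$ in the stated form, which simultaneously yields the differentiability of $b$. On the $\P_{\theta_0}$-almost sure event on which both $g(X,\cdot)$ (differentiable at $\theta_0$ by condition~\ref{cond_sup_J_g} of Assumption~\ref{hypo_CR_pred_L2_diff}) and $r(X,\cdot)$ (differentiable at $\theta_0$ by hypothesis) are differentiable at $\theta_0$, the \emph{non-random} function $b(\cdot) = g(X,\cdot) - r(X,\cdot)$ is differentiable at $\theta_0$ with $\J_\theta b(\theta_0) = \J_\theta g(X,\theta_0) - \J_\theta r(X,\theta_0)$; in particular $b$ is differentiable at $\theta_0$, and $\J_\theta r(X,\theta_0)$ is square-integrable under $\P_{\theta_0}$, being the difference of $\J_\theta g(X,\theta_0)$ (square-integrable by condition~\ref{cond_sup_J_g}) and the constant $\J_\theta b(\theta_0)$. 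Taking $\E_{\theta_0}$ then gives $G(\theta_0) = \E_{\theta_0}\J_\theta g(X,\theta_0) = \E_{\theta_0}\J_\theta r(X,\theta_0) + \J_\theta b(\theta_0)$, as asserted.

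I do not anticipate a genuine obstacle here: once the unbiased bound of Theorem~\ref{theo_CR_pred_unbiased} is in hand the argument is essentially bookkeeping. The only points demanding some care are the finiteness of the relevant second moments, so that the bias $b$, both QEPs, and the decomposition above are all well defined (this follows from condition~\ref{cond_sup_p} of Assumption~\ref{hypo_CR_pred_L2_diff} together with the square-integrability of $r(X,\theta_0)$ under $\P_{\theta_0}$ that is implicit in the QEP of $p(X)$ for $r$ being finite), and the elementary remark that ``$\P_{\theta_0}$-a.s.\ differentiable'' for a deterministic function means simply ``differentiable''.
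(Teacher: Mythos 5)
Your proof is correct; the paper in fact states this theorem without printing a proof, and your reduction to Theorem~\ref{theo_CR_pred_unbiased} (observing that $p(X)$ is unbiased for $g(X,\theta)=r(X,\theta)+b(\theta)$, splitting the QEP as $\E_{\theta_0}(p-r)^{\times 2}=\E_{\theta_0}(p-g)^{\times 2}+b(\theta_0)^{\times 2}$ via the vanishing cross terms, and identifying $\J_\theta b(\theta_0)=\J_\theta g(X,\theta_0)-\J_\theta r(X,\theta_0)$ on the almost-sure event of joint differentiability) is exactly the intended argument. Your side remarks on the finiteness of second moments and on the equality case carrying over unchanged are also handled at the same level of rigor as the rest of the paper.
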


\section{Efficient prediction}\label{section_pred_eff}

A predictor $p(X)$ is said \emph{efficient} when its QEP attains the Cramér-Rao bound.

\begin{theoreme}\label{theo_famille_expo_L2_diff}
Suppose $k=d$. Let $\Theta$ be a connected open set of $\R^d$.
Let $(\mathcal{X},\mathcal{B},\P_\theta, \theta\in\Theta)$ be a model, $g : \mathcal{X} \times \Theta \to \R^k$ and $p(X)$ an unbiased predictor of $g(X,\theta)$, that satisfy Assumption~\ref{hypo_CR_pred_L2_diff} for all $\theta\in\Theta$.

Suppose the following conditions hold.
\begin{enumerate}
\item $p(X)$ is efficient.
\item For all $\theta\in\Theta$, $G(\theta) = \E_\theta \J_\theta g(X,\theta)$ is invertible.
\item\label{cond_existence_gradient} There is $A : \Theta \to \R^k$ a differentiable function over $\Theta$, such that $(\J_\theta A(\theta))' = I(\theta)G(\theta)^{-1}$, for all $\theta\in\Theta$.
\item $\mathcal{X}$ is a topological space and $(\mathcal{X}, \mathcal{B})$ is a $\sigma$-compact space.
\item\label{cond_sup_sur_compact} For all compact sets $C\subset\mathcal{X}$, $\tilde{C} \subset \Theta$, $\sup_{x\in C, \theta \in \tilde{C}} \| \J_\theta g(x,\theta) \| < \infty$.
\item\label{cond_I_G_continuous} $\theta \mapsto I(\theta)$ and $\theta \mapsto G(\theta)$ are continuous.
\end{enumerate}

Then, for $\theta_0 \in \Theta$ fixed, there is a function $B:\mathcal{X}\times\Theta \to \R$, differentiable at $\theta\in\Theta$, such that for all $\theta\in\Theta$, for $\P_{\theta_0}$-almost all $x\in\mathcal{X}$,
\[
\frac{d\P_\theta}{d\P_{\theta_0}}(x) = \exp\big(A(\theta)'p(x) - B(x,\theta)\big),
\]
and $\nabla_\theta B(x,\theta) = (\J_\theta A(\theta))' g(x,\theta)$.
\end{theoreme}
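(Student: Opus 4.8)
The plan is to combine the equality case of the Cram\'er--Rao bound with an integration of the $\L^2$-derivative along paths in $\Theta$, following the argument of \cite{muller-funk1989} for estimation. \emph{Step 1 (efficiency pins down the $\L^2$-derivative).} Since $p(X)$ is unbiased and efficient and Assumption~\ref{hypo_CR_pred_L2_diff} holds at every $\theta\in\Theta$, Theorem~\ref{theo_CR_pred_unbiased} applies with any $\theta\in\Theta$ in the role of $\theta_0$, and, the bound being attained, its equality case gives $p(X)-g(X,\theta)=G(\theta)I(\theta)^{-1}\dot L'_\theta$, $\P_\theta$-a.s., for every $\theta\in\Theta$. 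As $k=d$, $G(\theta)$ is invertible (hypothesis~2), and $(\J_\theta A(\theta))'=I(\theta)G(\theta)^{-1}$ (hypothesis~3), this rearranges to
\[
\dot L'_\theta=(\J_\theta A(\theta))'\big(p(X)-g(X,\theta)\big),\qquad\P_\theta\text{-a.s.},\quad\theta\in\Theta,
\]
so efficiency forces the $\L^2$-derivative of the family, at every point, to coincide with this explicit, $\theta$-differentiable expression.

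\emph{Step 2 (candidate $B$ and reduction).} First one checks that $\P_\theta$ and $\P_{\theta_0}$ are mutually absolutely continuous for every $\theta$: condition~(\ref{cond_sup_L2}) of Assumption~\ref{hypo_CR_pred_L2_diff} ($\sup_{U(\theta)^2}\E L_{\cdot,\cdot}^2<\infty$) yields this on a neighbourhood of each point, and chaining along a path joining $\theta_0$ to $\theta$, using connectedness of $\Theta$, propagates it; $\sigma$-compactness (hypothesis~4 of the theorem) lets one fix a jointly measurable, $\P_{\theta_0}$-a.s.-positive version of $f_\theta(x):=\frac{\d\P_\theta}{\d\P_{\theta_0}}(x)$. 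Set $B(x,\theta):=A(\theta)'p(x)-\log f_\theta(x)$; then $f_\theta(x)=\exp(A(\theta)'p(x)-B(x,\theta))$ by construction, so the whole theorem reduces to the claim that, for $\P_{\theta_0}$-almost every $x$, $\theta\mapsto\log f_\theta(x)$ is differentiable on $\Theta$ with gradient $(\J_\theta A(\theta))'(p(x)-g(x,\theta))$ --- the value one expects, since in smooth dominated cases the realized $\L^2$-derivative $\dot L'_\theta$ is exactly $\nabla_\theta\log\frac{\d\P_\theta}{\d\P_{\theta_0}}$. Granting this, $B(x,\cdot)$ is differentiable at every $\theta$ and $\nabla_\theta B(x,\theta)=(\J_\theta A(\theta))'p(x)-\nabla_\theta\log f_\theta(x)=(\J_\theta A(\theta))'g(x,\theta)$, as required. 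Note that path-independence is automatic, $f_\theta$ being an essentially unique Radon--Nikodym derivative; this is why mere connectedness of $\Theta$ suffices and why no closedness of the one-form $(\J_\theta A(\theta))'(p(x)-g(x,\theta))\,\d\theta$ need be postulated.

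\emph{Step 3 (upgrading $\L^2$ to pointwise --- the core).} The relation of Step~1 is a statement in $\L^2(\P_\theta)$, equivalently about $\P_\theta$-convergence in probability of log-likelihood increments, and must be promoted to the pointwise-in-$x$, $\P_{\theta_0}$-a.s. differentiability claimed above. The plan is to integrate along $C^1$ paths $\gamma:[0,1]\to\Theta$. By $\L^2$-differentiability and continuity of $\theta\mapsto\dot L_\theta$ --- which follows from hypothesis~(\ref{cond_I_G_continuous}), since continuity of $I$ and $G$ makes $(\J_\theta A)'=IG^{-1}$ continuous, together with differentiability of $g(x,\cdot)$ --- the $\L^2(\P_{\theta_0})$-valued curve $t\mapsto\sqrt{f_{\gamma(t)}}$ is differentiable with derivative $\tfrac12\dot\gamma(t)'\dot L'_{\gamma(t)}\sqrt{f_{\gamma(t)}}$. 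Covering $\gamma([0,1])$ by finitely many compact pieces on which, by hypotheses~(\ref{cond_sup_sur_compact}) and~(\ref{cond_I_G_continuous}), $\sup_{x\in C,\,\theta\in\tilde C}\|\J_\theta g(x,\theta)\|<\infty$ and $\sup_{\theta\in\tilde C}\|(\J_\theta A(\theta))'\|<\infty$, and applying the uniform-integrability lemmas of the Appendix exactly as in the proof of Proposition~\ref{prop_deriv_g_L2_diff} --- to pass to the $\P_{\theta_0}$-a.s. limit and to keep $t\mapsto\sqrt{f_{\gamma(t)}}(x)$ continuous and bounded away from $0$ for $\P_{\theta_0}$-a.a. $x$ --- one may divide the $\L^2$-valued relation by $\sqrt{f_{\gamma(t)}}$ and integrate, getting
\[
\log f_{\gamma(1)}(x)-\log f_{\gamma(0)}(x)=\int_0^1\dot\gamma(t)'\,(\J_\theta A(\gamma(t)))'\big(p(x)-g(x,\gamma(t))\big)\,\d t
\]
for $\P_{\theta_0}$-a.a. $x$. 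With $\gamma(0)=\theta_0$ this exhibits $\log f_\theta(x)$ as a primitive of $\theta\mapsto(\J_\theta A(\theta))'(p(x)-g(x,\theta))$, and since that integrand is continuous in $\theta$ the primitive is genuinely differentiable, establishing the claim of Step~2.

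I expect the real difficulty to lie entirely in Step~3: fixing a jointly measurable version of $f_\theta(x)$, justifying the division by $\sqrt{f_{\gamma(t)}}$ uniformly along the path, and controlling the countably many $\P_{\theta_0}$-null exceptional sets (one per compact piece and per rational endpoint) so that all can be discarded at once, after which continuity of $\theta\mapsto\dot L'_\theta(x)$ extends the identity from a dense set of endpoints to all of $\Theta$. Steps~1 and~2 are formal. The hypothesis $k=d$ and invertibility of $G(\theta)$ (hence of $I(\theta)$) are used only to isolate $\dot L'_\theta$ in Step~1; the $\P_\theta$-a.s. differentiability of $g(X,\cdot)$ in Assumption~\ref{hypo_CR_pred_L2_diff} and the existence of $A$ in hypothesis~(\ref{cond_existence_gradient}) are precisely what make the limiting score $\theta$-differentiable, so that the word ``primitive'' is meaningful.
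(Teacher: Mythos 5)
Your Steps 1 and 2 match the paper: efficiency plus invertibility of $G(\theta)$ isolates $\dot L_\theta=I(\theta)G(\theta)^{-1}(p(X)-g(X,\theta))$ $\P_\theta$-a.s., and the whole problem reduces to showing that the likelihood ratio admits the path-integral representation $\log\frac{\d\P_\theta}{\d\P_{\theta_0}}(x)=\int_0^1\dot\theta_s'I(\theta_s)G(\theta_s)^{-1}(p(x)-g(x,\theta_s))\,\d s$, with path-independence of $A$ supplied by hypothesis~\ref{cond_existence_gradient} and the gradient theorem. But your Step 3 --- the decisive step, as you yourself note --- is a plan rather than a proof, and the device needed to carry it out is missing. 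You propose to differentiate $t\mapsto\sqrt{f_{\gamma(t)}}$ as an $\L^2(\P_{\theta_0})$-valued curve, divide by $\sqrt{f_{\gamma(t)}}$, and integrate pointwise. Two things block this as stated. First, the derivative identity $\dot L_\theta=I(\theta)G(\theta)^{-1}(p(X)-g(X,\theta))$ lives in $\L^2(\P_\theta)$, a different space for each $\theta$; your claim that $\theta\mapsto\dot L_\theta$ is continuous (as a curve in a fixed $\L^2$) does not follow from hypothesis~\ref{cond_I_G_continuous} alone --- indeed the paper explicitly remarks that it does \emph{not} assume continuous $\L^2$-differentiability here, and that adding it would change which hypotheses are needed. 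Second, and more fundamentally, converting the $\L^2$-valued linear ODE $\frac{\d}{\d t}\sqrt{f_{\gamma(t)}}=\tfrac12 a(t,\cdot)\sqrt{f_{\gamma(t)}}$ into the pointwise exponential formula requires a uniqueness/Gronwall argument in $\L^2$, which in turn needs the multiplier $a(t,x)=\dot\gamma(t)'I(\gamma(t))G(\gamma(t))^{-1}(p(x)-g(x,\gamma(t)))$ to be \emph{bounded in $x$} --- and it is not, since $p$ and $g$ are unbounded.

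The paper closes exactly this gap by never forming $\log f_\theta(x)$ pointwise. Instead it fixes a compact $B$, a partition $(R_i)$ of $\R^k$ into $\varepsilon$-rectangles, and a finite grid $s=0,1/n,\dots,1$ along the path, and cuts $B$ into cells $B_{i,u}=B\cap p^{-1}(R_i)\cap S_u$ on which \emph{both} $p(x)$ \emph{and the entire trajectory} $s\mapsto g(x,\theta_s)$ are confined to sets of diameter $O(\varepsilon)$ (the grid control of $g(x,\theta_{i/n})$ propagates to all $s$ via hypothesis~\ref{cond_sup_sur_compact}). On each cell the integrand $\dot\theta_s'\dot L_{\theta_s}(x)$ differs from its $\P_{\theta_s}$-average over the cell by at most $4c\varepsilon$, and that average integrates in $s$, through $h(s)=\frac{\d}{\d s}\log\P_{\theta_s}(B_{i,u})$ (an application of Proposition~\ref{prop_1_111_Liese} to indicator functions, not to densities), to exactly $\log\bigl(\P_\theta(B_{i,u})/\P_{\theta_0}(B_{i,u})\bigr)$. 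This yields $e^{-4c\varepsilon}\P_\theta(B)\leqslant\int_B f\,\d\P_{\theta_0}\leqslant e^{4c\varepsilon}\P_\theta(B)$ and, letting $\varepsilon\to0$, the identity $\int_B f\,\d\P_{\theta_0}=\P_\theta(B)$; mutual absolute continuity is obtained along the way by the boundedness of $h$ on each cell (your appeal to chaining condition~\ref{cond_sup_L2} addresses only one direction of absolute continuity without further argument). The author flags this double localization $B_{i,u}=B\cap p^{-1}(R_i)\cap S_u$ --- as opposed to the single cut $B\cap p^{-1}(R_i)$ that suffices in the estimation case of M\"uller-Funk et al. --- as the essential new idea of the proof; it is precisely the ingredient your Step 3 would need and does not contain.
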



\begin{proof}
Let $\theta \in \Theta$.
The predictor $p(X)$ is efficient hence $\P_\theta$-a.s.
\begin{align*}
p(X) & = g(X,\theta) + \left( \E_\theta \J_\theta g(X,\theta) \right) I(\theta)^{-1}\dot{L}_\theta\\
& = g(X,\theta) + G(\theta) I(\theta)^{-1}\dot{L}_\theta.
\end{align*}
Hence
\[
\dot{L}_\theta = I(\theta) G(\theta)^{-1} \big( p(X) - g(X,\theta) \big).
\]
Let  $s\mapsto \theta_s$ be a continuously differentiable path from $\theta_0$ to $\theta$ with $s\in[0,1]$. This path exists because $\Theta$ is open and connected.
We set
\[
f(x) = \exp \Big( \int_0^1 \dot{\theta}'_s \dot{L}_{\theta_s}(x) \, ds \Big)
= \exp \Big( \int_0^1 \big( \dot{\theta}'_s I(\theta_s) G(\theta_s)^{-1} p(x) - \phi(s,x) \big) ds \Big),
\]
with
\begin{align*}
\phi(s,x) & = \dot{\theta}'_s I(\theta_s) G(\theta_s)^{-1} g(x,\theta_s).
\end{align*}
We prove that for all event $B\in\mathcal{B}$, the following equality holds
\[
\int_B f(X) \, d\P_{\theta_0} = \P_\theta(B).
\]
Since $\mathcal{B}$ is $\sigma$-compact, one may assume that $B$ is a compact set.
For $\P_\theta$-almost all $x\in\mathcal{X}$, $s \mapsto g(x,\theta_s)$ is differentiable over $[0,1]$ (we remove from $B$ the points $x$ for which differentiability does not hold). We set
\[
M = \sup_{x\in B, s \in [0,1]} \| \partial_s g(x,\theta_s) \|
\leqslant \sup_{s\in[0,1]} \| \dot{\theta_s} \| \sup_{x\in B, t \in \{ \theta_s, s \in [0,1] \}} \| \J_\theta g(x,t) \|.
\]
The first supremum of the right hand side is finite because $(\theta_s, s \in [0,1])$ is continuously differentiable. The second one is finite from condition~\ref{cond_sup_sur_compact}. Hence $M < \infty$.
Let $\varepsilon>0$ and $(R_i)_{i\in\mathbb{N}}$ be a partition of $\mathbb{R}^k$ in rectangles of diameters at most $\varepsilon$, and let 
\[
n = \left\lceil \frac{M}{\varepsilon} \right\rceil.
\]
For all $u\in\mathbb{N}^{n+1}$ we let
\[
S_u = \left\{ x\in\mathcal{X} \, | \, \forall i\in\{0,\ldots,n\}, \, g(  x, \theta_{i/n} ) \in R_{u_i} \right\}.
\]
We then define
\[
B_{i,u} = B\cap p^{-1}(R_i)\cap S_u.
\]
Let $x\in B_{i,u}$ and $s\in[0,1]$ then,
\begin{align*}
\| g(x,\theta_s) \|
& \leqslant \| g(x,\theta_{\lfloor s n \rfloor / n}) \|
+ \| g(x, \theta_{\lfloor s n \rfloor / n}) - g(x, \theta_s) \| \\
& \leqslant \sup_{y \in R_{u_{\lfloor sn \rfloor}}} \|y\|
+ M \left| \lfloor s n \rfloor / n - s \right| \\
& \leqslant \sup_{y \in R_{u_{\lfloor sn \rfloor}}} \|y\|
+ \frac{M}{n} \\
& \leqslant \sigma_u + \frac{M}{n} < \infty,
\end{align*}
with
\[
\sigma_u = \sup_{ 0 \leqslant i \leqslant n, \;  y \in R_{u_i}} \| y \|.
\]
We prove by contradiction that $\P_{\theta_0}(B_{i,u}) > 0$ iff $\P_\theta(B_{i,u}) > 0$. Without loss of generality, suppose that $\P_{\theta_s}(B_{i,u}) > 0$ for $s\in[0,1)$ and $\P_\theta(B_{i,u}) = 0$. We set $H(s) = \log \P_{\theta_s}(B_{i,u})$. From Proposition~\ref{prop_1_111_Liese}, $s \mapsto \P_{\theta_s}(B_{i,u})$ is differentiable over $[0,1]$, hence it is continuous over $[0,1]$. Hence
\[
\lim_{s \to 1^-} \P_{\theta_s}(B_{i,u}) = 0.
\]
And therefore
\begin{equation}\label{equa_lim_H}
\lim_{s \to 1^-} H(s) = -\infty.
\end{equation}
Besides $H$ is differentiable over $[0,1)$. Its derivative is
\[
h(s) = \frac{ \dot{\theta}'_s \nabla_\theta \P_{\theta_s}(B_{i,u}) }{ \P_{\theta_s} (B_{i,u}) }
= \frac{1}{\P_{\theta_s}(B_{i,u})} \dot{\theta}'_s \int_{B_{i,u}} \dot{L}_{\theta_s} d\P_{\theta_s}
= m(s|B_{i,u}) - \phi(s|B_{i,u}),
\]
where
\begin{align*}
m(s|B_{i,u}) & = \P_{\theta_s}(B_{i,u})^{-1} \int_{B_{i,u}} \dot{\theta}'_s I(\theta_s) G(\theta_s)^{-1} p(X) \, d\P_{\theta_s}, \\
\phi(s|B_{i,u}) & = \P_{\theta_s}(B_{i,u})^{-1} \int_{B_{i,u}} \phi(s,X) \, dP_{\theta_s}.
\end{align*}
We prove that $h(s)$ is bounded. The function $s \mapsto \dot{\theta}'_s I(\theta_s) G(\theta_s)^{-1}$ is continuous over $[0,1]$, from condition~\ref{cond_I_G_continuous}, hence
\[
c = \sup_{s\in[0,1]} \| \dot{\theta}'_s I(\theta_s) G(\theta_s)^{-1} \| < \infty.
\]
Let $x\in B_{i,u}$, then $p(x) \in R_i \cup \{ 0 \}$ hence
\[
| \dot{\theta}'_s I(\theta_s) G(\theta_s)^{-1} p(x) | \leqslant c \sup_{y\in R_i} \| y \| = c \rho_i.
\]
Hence $| m(s|B_{i,u}) | \leqslant c \rho_i$. From what precedes we deduce
\[
\left| \phi(s,x) \right| \leqslant c \, \| g(x,\theta_s) \|
\leqslant c (\sigma_u + M/n).
\]
Hence $\phi(s|B_{i,u}) \leqslant c (\sigma_u + M/n)$. We deduce that $h$ is bounded over $[0,1)$, which contradicts (\ref{equa_lim_H}). Hence $\P_{\theta_0}(B_{i,u}) > 0$ iff $\P_\theta(B_{i,u}) > 0$, which implies that the distributions $\P_\theta$ and  $\P_{\theta_0}$ are absolutely continuous with respect to each other.

One may write
\begin{align*}
\int_{B_{i,u}} f(X) \, d\P_{\theta_0} & = 
\int_{B_{i,u}} \exp \bigg( \int_0^1 \Big( \dot{\theta}'_s I(\theta_s) G(\theta_s)^{-1} p(X) - m(s|B_{i,u}) + h(s) \\
& \quad \quad + \phi(s|B_{i,u}) - \phi(s,X) \Big) ds \bigg) \, d\P_{\theta_0} \\
& = \int_{B_{i,u}} \exp \bigg( \int_0^1 \Big( \dot{\theta}'_s I(\theta_s) G(\theta_s)^{-1} p(X) - m(s|B_{i,u}) \Big) \, ds \\
& \quad \quad + \int_0^1 \big( \phi(s|B_{i,u}) - \phi(s,X) \big) \, ds \bigg) \, d\P_{\theta_0}\, \frac{ \P_\theta(B_{i,u}) }{ \P_{\theta_0}(B_{i,u}) }.
\end{align*}
For all $x\in B_{i,u}$, $\dot{\theta}'_s I(\theta_s) G(\theta_s)^{-1} p(x)$ lies in the image of $R_i$ by the application
\[
y \mapsto \dot{\theta}'_s I(\theta_s) G(\theta_s)^{-1} y.
\]
Same thing for $m(s|B_{i,u})$ which is the mean of $\dot{\theta}'_s I(\theta_s) G(\theta_s)^{-1} p(x)$ over $B_{i,u}$. Hence
\[
\left| \dot{\theta}'_s I(\theta_s) G(\theta_s)^{-1} p(X) - m(s|B_{i,u}) \right|
\leqslant \sup_{s\in[0,1]} \| \dot{\theta}'_s I(\theta_s) G(\theta_s)^{-1} \| \, \mathrm{diam}(R_i) \leqslant c \varepsilon.
\]
Hence for all $x\in B_{i,u}$,
\[
\Big| \int_0^1 \big( \dot{\theta}'_s I(\theta_s) G(\theta_s)^{-1} p(x) - m(s|B_{i,u}) \big) \, ds \Big| \mathbbm{1}_{B_{i,u}} \leqslant c \varepsilon.
\]
Moreover
\[
\phi(s|B_{i,u}) - \phi(s,x) =
\frac{ \dot{\theta}'_s I(\theta_s) G(\theta_s)^{-1} }{ \P_{\theta_s}(B_{i,u}) } \int_{B_{i,u}} \big( g(X,\theta_s) - g(x,\theta_s) \big) \, d\P_{\theta_s}.
\]
For $x$, $x'\in B_{i,u}$,
\begin{align*}
\| g(x,\theta_s) - g(x',\theta_s) \|
& \leqslant \| g(x,\theta_{\lfloor sn\rfloor/n}) - g(x',\theta_{\lfloor sn\rfloor/n}) \| \\
& \quad + \| g(x,\theta_{\lfloor sn\rfloor/n}) - g(x,\theta_s) \| \\
& \quad + \| g(x',\theta_{\lfloor sn\rfloor/n}) - g(x',\theta_s) \| \\ 
& \leqslant \mathrm{diam}(R_{u_{\lfloor sn\rfloor}}) + \frac{2 M}{n} \leqslant 3 \varepsilon.
\end{align*}
Hence
\[
\left| \phi(s|B_{i,u}) - \phi(s,x) \right|
\leqslant \sup_{s\in[0,1]} \| \dot{\theta}'_s I(\theta_s) G(\theta_s)^{-1} \| \times 3 \varepsilon = 3 c \varepsilon.
\]
Hence
\[
e^{-4c\varepsilon} \P_\theta(B) \leqslant
\int_B f(X) \, d\P_{\theta_0} \leqslant
e^{4c\varepsilon} \P_\theta(B),
\]
for all $\varepsilon>0$. And therefore $\int_B f(X) \, d\P_{\theta_0} = \P_\theta(B)$.
Hence, for $\P_{\theta_0}$-almost all $x\in\mathcal{X}$,
\[
\frac{ d\P_\theta }{ d\P_{\theta_0} }(x) =
\exp \left( A(\theta)' p(x) - B(x,\theta) \right),
\]
with
\begin{align*}
A(\theta)' & = \int_0^1 \dot{\theta}'_s I(\theta_s) G(\theta_s)^{-1} \, ds,\\
B(x,\theta) & = \int_0^1 \dot{\theta}'_s I(\theta_s) G(\theta_s)^{-1} g(x,\theta_s) \, ds.
\end{align*}
From condition~\ref{cond_existence_gradient} and the gradient theorem, $A(\theta)$ does not depend on  $(\theta_s, s\in[0,1])$, the chosen path. Yet
\[
\frac{ d\P_\theta }{ d\P_{\theta_0} }(x) = \log f(x) = \int_0^1 \big( \dot{\theta}'_s I(\theta_s) G(\theta_s)^{-1} p(x) - \phi(s,x) \big) ds,
\]
does not depend on it either, hence $B(x,\theta)$ does not depend on it. Therefore
\[
\nabla_\theta B(x,\theta) = I(\theta) G(\theta)^{-1} g(x,\theta) = (\J_\theta A(\theta))' g(x,\theta).
\]
\end{proof}

\begin{remarque}
In Theorem~\ref{theo_famille_expo_L2_diff} we did not assumed \emph{continuous} $\L^2$-differen\-tiability as \cite{muller-funk1989} did for their analogous result in the case of estimation.
If we add a condition of continuous $\L^2$-differentiability in Theorem~\ref{theo_famille_expo_L2_diff}, this makes possible to save somme assumptions. More precisely, the result of Theorem~\ref{theo_famille_expo_L2_diff} also holds under the following conditions.
\begin{enumerate}
\item The family $(\P_\theta, \theta\in\Theta)$ is continuously $\L^2$-differentiable and $\Theta$ is a connected open set of $\R^d$.
\item The matrix $I(\theta)$ is invertible for all $\theta \in \Theta$.
\item $p(X)$ is an unbiased efficient predictor of $g(X,\theta)$.
\item For all $\theta$, $\E_\theta \| p(X) \|^2 < \infty$.
\item For all $\theta\in\theta$, $G(\theta) = \J_\theta \E_\theta  g(X,\theta) - \E_\theta g(X,\theta) \dot{L}'_\theta$ is invertible, or equivalently, $\E_\theta \left( p(X) - g(X,\theta) \right)^{\times 2}$ is invertible.
\item There exists $A : \Theta \to \R^k$ a differentiable function over $\Theta$, such that $(\J_\theta A(\theta))' = I(\theta)G(\theta)^{-1}$, for all $\theta\in\Theta$.
\item $\mathcal{X}$ is a topological space and $(\mathcal{X}, \mathcal{B})$ is a $\sigma$-compact space.
\item For all compact set $C\subset\mathcal{X}$, $\tilde{C} \subset \Theta$, $\sup_{x\in C, \theta \in \tilde{C}} \| \J_\theta g(x,\theta) \| < \infty$.
\end{enumerate}
Conditions to have $G(\theta) = \E_\theta \J_\theta g(X,\theta)$ are not fulfilled anymore, hence we only get the expression $G(\theta) = \J_\theta \E_\theta  g(X,\theta) - \E_\theta g(X,\theta) \dot{L}'_\theta$. In the list of conditions above one saves conditions~\ref{cond_sup_J_g}, \ref{cond_sup_L2} and \ref{cond_sup_p} of Assumption~\ref{hypo_CR_pred_L2_diff} and condition~\ref{cond_I_G_continuous} of Theorem~\ref{theo_famille_expo_L2_diff}.
\end{remarque}

\begin{remarque}
The essential idea in the proof of Theorem~\ref{theo_famille_expo_L2_diff} is to cut the set $B$ with the family of subsets with the following form
\[
B_{i,u} = B\cap p^{-1}(R_i)\cap S_u,
\]
while for the result in the case of estimation, Müller-Funk \emph{et al.} \cite{muller-funk1989} took the family of subsets with the form $B_i = B\cap p^{-1}(R_i)$.
\end{remarque}

\begin{remarque}
In the particular case where $g$ does not depend on $X$, $g(X,\theta) = g(\theta)$, Theorem~\ref{theo_famille_expo_L2_diff} gives the well-known result that the existence of an efficient unbiased estimator implies the family is exponential.
\end{remarque}

\appendix

\section{$\L^2$-differentiable families}

We remind some defintions and results about $\L^2$-differentiable families of distributions, we refer to \cite{liese2008} p. 58 and next.
For $\theta$, $\theta_0$ in $\Theta$, any random variable $L_{\theta_0,\theta}$ taking values in $[0,+\infty]$ is called likelihood ratio of $\P_\theta$ with respect to $\P_{\theta_0}$ if, for all $A\in\mathcal{A}$,
\[
\P_\theta(A) = \int_A L_{\theta_0, \theta} d\P_{\theta_0} + \P_\theta \left( A \cap \{ L_{\theta_0,\theta} = +\infty \} \right).
\]
$L_{\theta_0,\theta}$ is a probability density of $\P_\theta$ with respect to $\P_{\theta_0}$ if and only if $\P_\theta \ll \P_{\theta_0}$. If $\nu$ is a measure over $\mathcal{A}$ that dominates $\{ \P_\theta, \P_{\theta_0} \}$ with $\{ f_\theta, f_{\theta_0} \}$ the corresponding densities then
\[
L_{\theta_0, \theta} = \frac{f_\theta}{f_{\theta_0}} \mathbbm{1}_{\{f_{\theta_0} > 0\}} + \infty \mathbbm{1}_{\{f_{\theta_0}=0, f_\theta>0\}},
\quad \{\P_\theta, \P_{\theta_0} \}\text{-a.s.}
\]
For all $\theta\in\Theta$, for all $u\in\R^d$ such that $u+\theta\in\Theta$, we set
\[
L_\theta(u) = L_{\theta, \theta + u}.
\]

\begin{definition}
The family $(\P_\theta, \theta\in\Theta)$ is said \emph{$\L^2$-differentiable} at $\theta_0\in\mathring{\Theta}$, if there is $U(\theta_0)$ a neighbourhood of $\theta_0$, such that for all $\theta\in U(\theta_0)$, $\P_\theta \ll \P_{\theta_0}$, and if there is $\dot{L}_{\theta_0}\in\L_{\P_{\theta_0}}^2(\R^d)$, called the $\L^2$-derivative of the model at $\theta_0$, such that as $u\to 0$,
\[
\E_{\theta_0} \left( L_{\theta_0}^{1/2}(u) - 1 - \frac{1}{2} u' \dot{L}_{\theta_0} \right)^2 = o(\| u \|_{\R^d}).
\]
The matrix $I(\theta_0) = \E_{\theta_0} \dot{L}'_{\theta_0} \dot{L}_{\theta_0}$ is called the \emph{Fisher information matrix} of the model at $\theta_0$.
\end{definition}

\begin{remarque}
If $\P_\theta \ll \nu$ for all $\theta\in\Theta$, then the family $(\P_\theta, \theta\in\Theta)$ is $\L^2$-differentiable at $\theta_0\in\mathring{\Theta}$, if and only if there is $\dot{f}_{\theta_0}\in\L_\nu^2(\R^d)$ such that as $u\to 0$,
\[
\int \left( \sqrt{f_{\theta_0+u}} - \sqrt{f_{\theta_0}} - \frac{1}{2} u' \dot{f}_{\theta_0} \right)^2 d\nu = o(\| u \|_{\R^d}).
\]
With $f_{\theta_0}$ and $f_{\theta_0 + u}$ the densities of $\P_{\theta_0}$ and $\P_{\theta_0 + u}$ with respect to $\nu$. We then have
\[
\dot{L}_{\theta_0} = \frac{\dot{f}_{\theta_0}(X)}{\sqrt{f_{\theta_0}(X)}},
\quad \P_{\theta_0}\text{-a.s.}
\]
Some authors call this property Hellinger-differentiability.
\end{remarque}

The following result is a recasting of Propositions~1.110 and 1.111 of Liese and Miescke (2008) \cite{liese2008}.

\begin{proposition}\label{prop_1_111_Liese}
Let $(\P_\theta, \theta\in\Theta)$ be a $\L^2$-differentiable family at $\theta_0\in\mathring{\Theta}$ with $\dot{L}_{\theta_0}$ the $\L^2$-derivative and let $\delta$ a r.v. taking values in $\R^k$ such that there is a neighbourhood $U(\theta_0)$ of $\theta_0$ with
\[
\sup_{\theta\in U(\theta_0)} \E_\theta \| \delta \|_{\R^k}^2 < \infty.
\]
Then $\psi : \theta \mapsto \E_\theta \delta$ is differentiable at $\theta_0$, and the jacobian marix of $\psi$ is
\[
\J_\theta \psi(\theta_0) = \E_{\theta_0} \big( \delta \dot{L}'_{\theta_0} \big).
\]
In particular, $\theta\in\Theta$, $\E_\theta \dot{L}_\theta = 0$.
\end{proposition}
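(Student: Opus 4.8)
The plan is to recover Propositions~1.110 and~1.111 of \cite{liese2008} by expanding the likelihood ratio through its square root. First I would reduce to the scalar case $k=1$: each component $\delta_i$ satisfies $\sup_{\theta\in U(\theta_0)}\E_\theta\delta_i^2\le\sup_{\theta\in U(\theta_0)}\E_\theta\|\delta\|_{\R^k}^2<\infty$, so the hypothesis is inherited componentwise, and the matrix formula is obtained by stacking the resulting row vectors. Fix $u\to0$ with $\theta_0+u\in U(\theta_0)$, so that $\P_{\theta_0+u}\ll\P_{\theta_0}$ with density $L_{\theta_0}(u)$; changing measure gives $\psi(\theta_0+u)-\psi(\theta_0)=\E_{\theta_0}(\delta(L_{\theta_0}(u)-1))$. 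Writing $w_u=L_{\theta_0}^{1/2}(u)$ and $r_u=w_u-1-\tfrac12 u'\dot{L}_{\theta_0}$, the identity $w_u^2-1=(w_u-1)^2+2(w_u-1)$ yields
\[
L_{\theta_0}(u)-1=u'\dot{L}_{\theta_0}+2r_u+(w_u-1)^2,
\]
hence
\[
\psi(\theta_0+u)-\psi(\theta_0)=u'\E_{\theta_0}(\delta\dot{L}_{\theta_0})+2\E_{\theta_0}(\delta r_u)+\E_{\theta_0}(\delta(w_u-1)^2).
\]
The first term is linear in $u$ and is the candidate differential; the task is to show the other two are $o(\|u\|)$.

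Next I would dispatch the two remainders. For $\E_{\theta_0}(\delta r_u)$, the Cauchy--Schwarz inequality gives $|\E_{\theta_0}(\delta r_u)|\le(\E_{\theta_0}\delta^2)^{1/2}(\E_{\theta_0}r_u^2)^{1/2}$, and $\L^2$-differentiability forces $(\E_{\theta_0}r_u^2)^{1/2}=o(\|u\|)$, so this term is $o(\|u\|)$. For the quadratic term I would split off one factor $w_u-1$,
\[
|\E_{\theta_0}(\delta(w_u-1)^2)|\le(\E_{\theta_0}(\delta^2(w_u-1)^2))^{1/2}(\E_{\theta_0}(w_u-1)^2)^{1/2}.
\]
The triangle inequality and the expansion of $w_u$ give $\E_{\theta_0}(w_u-1)^2=\bigo(\|u\|^2)$, so the second factor is $\bigo(\|u\|)$ and it suffices to prove that $\E_{\theta_0}(\delta^2(w_u-1)^2)\to0$ as $u\to0$.

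The main obstacle is exactly this convergence, and it is where the \emph{uniform} square-integrability $\sup_{\theta\in U(\theta_0)}\E_\theta\|\delta\|_{\R^k}^2<\infty$ is indispensable---mere integrability at $\theta_0$ would not suffice. I would argue by truncation. For $M>0$, split the expectation over $\{|\delta|\le M\}$ and $\{|\delta|>M\}$. On the first set the integrand is at most $M^2(w_u-1)^2$, with expectation $\bigo(\|u\|^2)\to0$ for each fixed $M$. On the second set, bounding $(w_u-1)^2\le2(w_u^2+1)=2(L_{\theta_0}(u)+1)$ and changing measure back, the contribution is at most $2\E_{\theta_0+u}(\delta^2\mathbbm{1}_{|\delta|>M})+2\E_{\theta_0}(\delta^2\mathbbm{1}_{|\delta|>M})$. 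The uniform $\L^2$-bound, combined with the uniform integrability machinery of the Appendix (in the spirit of Lemma~\ref{lem_prob_1_109_Liese}, exploiting $w_u\to1$ in $\L^2$), makes $\{\delta^2\}$ uniformly integrable across the measures $\P_{\theta_0+u}$, so this tail is small uniformly in $u$ once $M$ is large. Letting $u\to0$ and then $M\to\infty$ yields the claim.

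Collecting the estimates gives $\psi(\theta_0+u)-\psi(\theta_0)=u'\E_{\theta_0}(\delta\dot{L}_{\theta_0})+o(\|u\|)$, i.e. $\psi$ is differentiable at $\theta_0$ with $\J_\theta\psi(\theta_0)=\E_{\theta_0}(\delta\dot{L}_{\theta_0}')$ (a row vector for scalar $\delta$, a $k\times d$ matrix after reassembling the components). The final assertion follows by specialising to $\delta\equiv1$: the hypothesis holds trivially, $\psi\equiv1$ is constant so $\J_\theta\psi(\theta_0)=0$, and the formula gives $0=\E_{\theta_0}(1\cdot\dot{L}_{\theta_0}')=(\E_{\theta_0}\dot{L}_{\theta_0})'$; since $\theta_0$ was an arbitrary point of $\L^2$-differentiability, $\E_\theta\dot{L}_\theta=0$ at every such $\theta$.
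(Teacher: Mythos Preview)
The paper does not actually prove Proposition~\ref{prop_1_111_Liese}; it merely states it as ``a recasting of Propositions~1.110 and~1.111 of Liese and Miescke (2008)'' and invokes it as a black box. Your outline is the standard square-root expansion argument behind those propositions, and the overall architecture---reducing to $k=1$, writing $L_{\theta_0}(u)-1=u'\dot L_{\theta_0}+2r_u+(w_u-1)^2$, disposing of $\E_{\theta_0}(\delta r_u)$ by Cauchy--Schwarz, and then controlling the quadratic piece---is correct and matches what one finds in Liese--Miescke.

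The one soft spot is the sentence ``the uniform $\L^2$-bound \ldots\ makes $\{\delta^2\}$ uniformly integrable across the measures $\P_{\theta_0+u}$.'' A uniform bound $\sup_u \E_{\theta_0+u}\delta^2<\infty$ is only an $\L^1$-bound on $\delta^2$ under each $\P_{\theta_0+u}$; it does \emph{not} by itself yield $\sup_u \E_{\theta_0+u}(\delta^2\mathbbm 1_{|\delta|>M})\to0$, and Lemma~\ref{lem_prob_1_109_Liese} does not give this either. The clean way to close the argument is to apply Lemma~\ref{lem_prob_1_109_Liese} directly to the \emph{ratio} $\|u\|^{-1}\delta(w_u-1)^2$: along any sequence $u_n\to0$ with $u_n/\|u_n\|\to a$, set $X_n=(w_{u_n}-1)/\|u_n\|$ and $Y_n=\delta(w_{u_n}-1)$. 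Then $X_n\to\tfrac12 a'\dot L_{\theta_0}$ in $\L^2(\P_{\theta_0})$ by $\L^2$-differentiability, and $\sup_n\E_{\theta_0}Y_n^2\le 2\E_{\theta_0+u_n}\delta^2+2\E_{\theta_0}\delta^2\le 4C$ by the uniform bound (this is where it is really used). Lemma~\ref{lem_prob_1_109_Liese} gives uniform integrability of $X_nY_n=\|u_n\|^{-1}\delta(w_{u_n}-1)^2$, which converges to $0$ in probability, hence in $\L^1$. This yields $\E_{\theta_0}\delta(w_u-1)^2=o(\|u\|)$ without ever asserting uniform integrability of $\delta^2$ across the family. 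With this adjustment your proof is complete; the specialisation $\delta\equiv1$ for $\E_\theta\dot L_\theta=0$ is fine.
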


We give the definition of \emph{continuous} $\L^2$-differentiability.

\begin{definition}
Let $(\P_\theta, \theta \in \Theta)$ be an $\L^2$-differentiable family over $\Theta$, with $\dot{L}_\theta$ as $\L^2$-derivative. We say that $(\P_\theta, \theta \in \Theta)$ is a \emph{continuously} $\L^2$-differentiable family over $\Theta$ if for all $\theta_0 \in \Theta$,
\[
\lim_{\theta\to\theta_0} \| L_{\theta,\theta_0}^{1/2} \dot{L}_\theta - \dot{L}_{\theta_0} \|^2 = 0.
\]
\end{definition}

%

The two following lemmas are useful for proving Proposition~\ref{prop_deriv_g_L2_diff} which allows to obtain the simpler form of the Cramér-Rao inequality for predictors in Theorem~\ref{theo_CR_pred_unbiased}.
The following result is Lemma~1.106 of Liese and Miescke (2008) \cite{liese2008}.

\begin{lemme}\label{lem_1_106_Liese}
Let $(\P_\theta,\theta\in\Theta)$ be a family of probability measures, $\Theta\subset\R^d$. Let $\theta_0\in\mathring{\Theta}$ and $U(\theta_0)$ be a neighbourhood of $\theta_0$, suppose that for all $\theta\in U(\theta_0)$, $\P_\theta \ll \P_{\theta_0}$. Then the family $(\P_\theta,\theta\in\Theta)$ is $\L^2$-differentiable at $\theta_0$, iff the two following conditions are fulfilled.
\begin{align*}
L_{\theta_0}(u) - 1 & = u' \dot{L}_{\theta_0} + o_{\P_{\theta_0}}(\|u\|)\\
\E_{\theta_0} \left( L_{\theta_0}^{1/2}(u) - 1 \right)^2 & = \frac{1}{4} u' I(\theta_0) u + o(\|u\|^2).
\end{align*}
\end{lemme}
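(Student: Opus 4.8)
The plan is to reduce the whole statement to the single random quantity $\xi(u) = L_{\theta_0}^{1/2}(u) - 1$ and the candidate linear term $D(u) = \tfrac12 u'\dot{L}_{\theta_0}$, because by the definition of $\L^2$-differentiability the family is $\L^2$-differentiable at $\theta_0$ with derivative $\dot{L}_{\theta_0}$ precisely when $\E_{\theta_0}\big(\xi(u) - D(u)\big)^2 = o(\|u\|^2)$ as $u \to 0$. Two elementary facts are used throughout: the algebraic identity
\[
L_{\theta_0}(u) - 1 = 2\xi(u) + \xi(u)^2,
\]
and the normalisation $\E_{\theta_0} L_{\theta_0}(u) = 1$, which holds because $\P_{\theta_0+u} \ll \P_{\theta_0}$ on $U(\theta_0)$ by hypothesis. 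I would establish the two implications separately.

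For the direct implication, assume $\L^2$-differentiability and write $\xi(u) = D(u) + r(u)$ with $\E_{\theta_0} r(u)^2 = o(\|u\|^2)$. Substituting into the identity above (and using $u'\dot{L}_{\theta_0} = 2D(u)$) gives
\[
L_{\theta_0}(u) - 1 - u'\dot{L}_{\theta_0} = 2r(u) + D(u)^2 + 2D(u)r(u) + r(u)^2.
\]
Each term on the right is $o_{\P_{\theta_0}}(\|u\|)$: the term $r(u)$ by Chebyshev's inequality applied to $\E_{\theta_0} r(u)^2 = o(\|u\|^2)$; the term $D(u)^2$ because $D(u)^2/\|u\| \leqslant \tfrac14 \|u\|\,\|\dot{L}_{\theta_0}\|^2 \to 0$ almost surely; and $2D(u)r(u)$ and $r(u)^2$ by Cauchy--Schwarz together with the same $\L^1$ bound. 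This yields the first expansion. For the second, expand
\[
\E_{\theta_0}\xi(u)^2 = \E_{\theta_0}D(u)^2 + 2\E_{\theta_0}\big(D(u)r(u)\big) + \E_{\theta_0}r(u)^2,
\]
note that $\E_{\theta_0}D(u)^2 = \tfrac14 u'I(\theta_0)u$ by the definition of the Fisher matrix, and bound the last two terms by $o(\|u\|^2)$ via Cauchy--Schwarz.

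For the converse, it suffices to show $\E_{\theta_0}\big(\xi(u_n) - D(u_n)\big)^2 = o(\|u_n\|^2)$ along an arbitrary sequence $u_n \to 0$; using compactness of the unit sphere I would pass to a subsequence with $v_n := u_n/\|u_n\| \to a$. Set $\tilde{\xi}_n = \xi(u_n)/\|u_n\|$ and $\tilde{D}_n = \tfrac12 v_n'\dot{L}_{\theta_0}$. The second condition gives $\E_{\theta_0}\xi(u_n)^2 = O(\|u_n\|^2)$, hence $\xi(u_n)^2 = o_{\P_{\theta_0}}(\|u_n\|)$; inserting this into the identity and into the first condition yields $\tilde{\xi}_n \to \tfrac12 a'\dot{L}_{\theta_0}$ in $\P_{\theta_0}$-probability. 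The second condition also gives
\[
\E_{\theta_0}\tilde{\xi}_n^2 = \tfrac14 v_n'I(\theta_0)v_n + o(1) \longrightarrow \tfrac14 a'I(\theta_0)a = \E_{\theta_0}\big(\tfrac12 a'\dot{L}_{\theta_0}\big)^2.
\]

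The hard part is the final upgrade from convergence in probability to convergence in $\L^2$, which is exactly what separates the two expansions from full $\L^2$-differentiability: the first condition delivers only in-probability control, whereas the definition demands an $\L^2$ estimate. The mechanism is that $\tilde{\xi}_n \to \tfrac12 a'\dot{L}_{\theta_0}$ in probability, together with the convergence of the second moments $\E_{\theta_0}\tilde{\xi}_n^2$ to the finite second moment of the limit, forces uniform integrability of $(\tilde{\xi}_n^2)$, and hence, by Vitali's convergence theorem, $\tilde{\xi}_n \to \tfrac12 a'\dot{L}_{\theta_0}$ in $\L^2(\P_{\theta_0})$. Since also $\tilde{D}_n \to \tfrac12 a'\dot{L}_{\theta_0}$ in $\L^2$ (because $v_n \to a$ and $\dot{L}_{\theta_0} \in \L^2(\P_{\theta_0})$), we obtain $\E_{\theta_0}\big(\tilde{\xi}_n - \tilde{D}_n\big)^2 \to 0$, i.e. $\E_{\theta_0}\big(\xi(u_n) - D(u_n)\big)^2 = o(\|u_n\|^2)$ along the subsequence. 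A routine subsequence argument then removes the passage to the limiting direction $a$ and yields the claim as $u \to 0$.
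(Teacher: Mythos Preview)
The paper does not supply its own proof of this lemma; it simply quotes it as Lemma~1.106 of Liese and Miescke (2008), so there is no in-paper argument to compare against. Your proof is correct and is essentially the standard argument one finds for this characterisation: reduce everything to $\xi(u)=L_{\theta_0}^{1/2}(u)-1$ via the identity $L_{\theta_0}(u)-1=2\xi(u)+\xi(u)^2$, handle the forward direction by expanding $\xi(u)=D(u)+r(u)$ and bounding the remainder terms, and for the converse combine the in-probability expansion with the second-moment expansion to get $\tilde{\xi}_n\to\tfrac12 a'\dot L_{\theta_0}$ in probability together with convergence of second moments, then upgrade to $\L^2$ convergence via Vitali (Theorem~\ref{theo_vitali} in this paper). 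The subsequence/compactness reduction on the unit sphere and the final subsequence argument are the right way to pass from sequences to the full $u\to 0$ statement.
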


The following lemma is useful to prove Proposition~\ref{prop_deriv_g_L2_diff}.

\begin{lemme}\label{lem_prob_1_109_Liese}
Let $X$, $X_n$, $Y_n$, $n=1,2,\ldots$ random variables such that $\E X^2 < \infty$, $\E(X_n-X)^2 \to 0$, and $\sup_{n\in\N} \E Y_n^2 < \infty$, then the sequence $(X_n Y_n, n\in\N)$ is uniformly integrable.
\end{lemme}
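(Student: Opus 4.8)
The plan is to reduce the uniform integrability of the product sequence $(X_n Y_n)$ to that of the squared sequence $(X_n^2)$ by a single application of the Cauchy--Schwarz inequality, and then to obtain the latter from $L^1$-convergence of $X_n^2$ to $X^2$. The two hypotheses on $(X_n)$ and $(Y_n)$ will interact only through Cauchy--Schwarz, so the $L^2$-bound on the $Y_n$ never needs to be upgraded to convergence.

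First I would record the bound obtained by Cauchy--Schwarz: for any event $A$,
\[
\E\big[ |X_n Y_n| \mathbbm{1}_A \big] \leqslant \big( \E[ X_n^2 \mathbbm{1}_A ] \big)^{1/2} \big( \E Y_n^2 \big)^{1/2} \leqslant M^{1/2} \big( \E[ X_n^2 \mathbbm{1}_A ] \big)^{1/2},
\]
where $M = \sup_n \E Y_n^2 < \infty$ by hypothesis. Taking $A$ equal to the whole space shows that $(X_n Y_n)$ is bounded in $L^1$, since $\E X_n^2 \to \E X^2$ by the $L^2$-convergence $\E(X_n - X)^2 \to 0$. The same display shows that making $\sup_n \E[ |X_n Y_n| \mathbbm{1}_A ]$ small amounts to making $\sup_n \E[ X_n^2 \mathbbm{1}_A ]$ small, so it suffices to prove that $(X_n^2)$ is uniformly integrable.

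Next I would establish the uniform integrability of $(X_n^2)$ through convergence in $L^1$. From $\E(X_n - X)^2 \to 0$ and $\E X^2 < \infty$ the norms $\| X_n + X \|_2 \leqslant \| X_n - X \|_2 + 2\| X \|_2$ stay bounded, so by Cauchy--Schwarz again
\[
\E | X_n^2 - X^2 | = \E | (X_n - X)(X_n + X) | \leqslant \| X_n - X \|_2 \, \| X_n + X \|_2 \longrightarrow 0.
\]
Thus $X_n^2 \to X^2$ in $L^1$. An $L^1$-convergent sequence is uniformly integrable: the integrable limit $X^2$ is itself uniformly integrable, the tail $\{ X_n^2 : n \geqslant N \}$ is uniformly close to it for $N$ large, and the finitely many initial terms are each integrable. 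Hence $(X_n^2)$ is uniformly integrable.

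Finally I would assemble the two ingredients via the standard characterization on a probability space: a sequence is uniformly integrable if and only if it is bounded in $L^1$ and uniformly absolutely continuous, meaning that for every $\varepsilon > 0$ there is $\delta > 0$ such that $\P(A) < \delta$ implies $\sup_n \E[ |Z_n| \mathbbm{1}_A ] < \varepsilon$. Boundedness in $L^1$ was already noted. For the second property, given $\varepsilon > 0$ the uniform integrability of $(X_n^2)$ furnishes $\delta > 0$ with $\P(A) < \delta \Rightarrow \sup_n \E[ X_n^2 \mathbbm{1}_A ] < \varepsilon^2 / M$, and then the first display gives $\sup_n \E[ |X_n Y_n| \mathbbm{1}_A ] < \varepsilon$, as required. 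I do not expect a genuine obstacle here; the only point requiring care is recognising at the outset that the problem reduces cleanly to the uniform integrability of $(X_n^2)$, after which every step is a routine $L^2$ estimate.
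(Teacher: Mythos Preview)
Your proof is correct and follows essentially the same route as the paper: reduce via Cauchy--Schwarz to controlling $\E[X_n^2 \mathbbm{1}_A]$, show that $(X_n^2)$ is uniformly integrable, and conclude through the $L^1$-boundedness plus equicontinuity characterization. The only cosmetic difference is that the paper obtains uniform integrability of $(X_n^2)$ by a direct appeal to Vitali's theorem with $p=2$, whereas you first pass through the $L^1$-convergence $X_n^2 \to X^2$; these amount to the same thing.
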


\begin{proof}
The convergence $\E(X_n-X)^2 \to 0$ implies $\exists n_0 \in \N$, $\sup_{n \geqslant n_0} \E X_n^2 < \infty$. We deduce
\[
\sup_{n \geqslant n_0} \E |X_n Y_n|
\leqslant \left ( \sup_{n \geqslant n_0} \E X_n^2 \right)^{1/2} \left ( \sup_{n \geqslant n_0} \E Y_n^2 \right)^{1/2} < \infty.
\]
Let $A$ be an event and $\varepsilon > 0$,
\[
\E |X_n Y_n| \mathbbm{1}_A
\leqslant
\left( \E X_n^2 \mathbbm{1}_A \right)^{1/2} \left( \E Y_n^2 \right)^{1/2}
\leqslant
\left( \E X_n^2 \mathbbm{1}_A \right)^{1/2} C^{1/2},
\]
with $C = \sup_{n\in\N} \E Y_n^2$.
Yet from $\E(X_n - X)^2 \to 0$ and $\E X^2 < \infty$, we deduce that the sequence $(X_n, n\in\N)$ is uniformly integrable (Theorem~\ref{theo_vitali}). Hence there are $n_0\in\N$ and $\alpha > 0$, such that for all $n\geqslant n_0$,
\[
\P(A) < \alpha \, \Rightarrow \, \E X_n^2 \mathbbm{1}_A < \frac{\varepsilon^2}{C}.
\]
We deduce, for all event $A$ such that $\P(A) < \alpha$, for all $n \geqslant n_0$, $\E |X_n Y_n| \mathbbm{1}_A < \varepsilon$. The sequence $(X_n Y_n, n\in\N)$ is hence equicontinuous. We deduce that it is uniformly integrable.
\end{proof}

\section{Uniform integrability and convergence}

\begin{definition}
We say that a family $\mathcal{F}$ of real r.v. is uniformly integrable if
\[
\sup_{X\in\mathcal{F}} \E \left( |X|\mathbbm{1}_{|X|>a} \right) \xrightarrow[a \to \infty]{} 0.
\]
\end{definition}

\begin{definition}
We say that a sequence of real r.v. $(X_n, n\in\N)$ is uniformly integrable if there is $n_0\in \N$ such that the family $(X_n, n\geqslant n_0)$ is uniformly integrable.
\end{definition}

\begin{proposition}
The family $\mathcal{F}$ is uniformly integrable iff
\begin{enumerate}
\item The family $\mathcal{F}$ is bounded in $\L^1$, \emph{i.e.} $\sup_{X\in\mathcal{F}} \E |X| < \infty$,
\item The family $\mathcal{F}$ is equicontinuous, \emph{i.e.} for all $\varepsilon>0$, there is $\alpha>0$, such that $\P(A) < \alpha$, implies $\sup_{X\in\mathcal{F}} \E \left( |X| \mathbbm{1}_A \right) < \varepsilon$.
\end{enumerate}
\end{proposition}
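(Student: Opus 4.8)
The plan is to prove the two implications separately; this is the standard characterization of uniform integrability in terms of $\L^1$-boundedness together with uniform absolute continuity, so no deep idea is involved, only careful splitting of expectations along the decomposition $|X| = |X|\mathbbm{1}_{|X|\leqslant a} + |X|\mathbbm{1}_{|X|>a}$.

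\textbf{From uniform integrability to conditions 1 and 2.} Assume $\mathcal{F}$ is uniformly integrable. For $\L^1$-boundedness, pick $a_0$ with $\sup_{X\in\mathcal{F}}\E\left(|X|\mathbbm{1}_{|X|>a_0}\right)\leqslant 1$; then for every $X\in\mathcal{F}$, $\E|X| = \E\left(|X|\mathbbm{1}_{|X|\leqslant a_0}\right) + \E\left(|X|\mathbbm{1}_{|X|>a_0}\right) \leqslant a_0 + 1$, which gives condition 1. For equicontinuity, fix $\varepsilon>0$, choose $a$ with $\sup_{X\in\mathcal{F}}\E\left(|X|\mathbbm{1}_{|X|>a}\right)<\varepsilon/2$, and set $\alpha = \varepsilon/(2a)$; then for any event $A$ with $\P(A)<\alpha$ and any $X\in\mathcal{F}$, $\E\left(|X|\mathbbm{1}_A\right)\leqslant a\,\P(A) + \E\left(|X|\mathbbm{1}_{|X|>a}\right) < \varepsilon/2 + \varepsilon/2 = \varepsilon$, which gives condition 2.

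\textbf{From conditions 1 and 2 to uniform integrability.} Assume 1 and 2 hold and write $M = \sup_{X\in\mathcal{F}}\E|X| < \infty$. Fix $\varepsilon>0$ and let $\alpha>0$ be the threshold supplied by condition 2. By Markov's inequality, $\P(|X|>a)\leqslant \E|X|/a \leqslant M/a$ for every $X\in\mathcal{F}$, so choosing any $a > M/\alpha$ forces $\P\left(\{|X|>a\}\right)<\alpha$ uniformly in $X$. Applying condition 2 with the event $A = \{|X|>a\}$ then gives $\E\left(|X|\mathbbm{1}_{|X|>a}\right)<\varepsilon$ for every $X\in\mathcal{F}$, hence $\sup_{X\in\mathcal{F}}\E\left(|X|\mathbbm{1}_{|X|>a}\right)\leqslant\varepsilon$. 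Since $a\mapsto\sup_{X\in\mathcal{F}}\E\left(|X|\mathbbm{1}_{|X|>a}\right)$ is non-increasing and $\varepsilon$ is arbitrary, this quantity tends to $0$ as $a\to\infty$, that is, $\mathcal{F}$ is uniformly integrable.

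The only step carrying an actual idea is the use of Markov's inequality in the converse, which is what converts the a priori arbitrary small-probability sets of condition 2 into the tail sets $\{|X|>a\}$ appearing in the definition of uniform integrability; everything else is bookkeeping. I do not anticipate any genuine obstacle.
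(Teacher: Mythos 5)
Your proof is correct and is the standard argument (splitting $|X|$ at level $a$ for the forward direction, Markov's inequality applied to the tail event $\{|X|>a\}$ for the converse); the paper states this proposition without proof, as a known characterization of uniform integrability. Nothing to add.
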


The result that follows is one of the versions of Vitali's theorem.

\begin{theoreme}\label{theo_vitali}
Let $p\in(0,+\infty)$, let $X$ be a r.v. and $(X_n,n\in\N)$ be a sequence of r.v. such that $\E X^p <\infty$ and for all $n$, $\E X_n^p <\infty$. Then the following conditions are equivalent.
\begin{enumerate}
\item $X_n \xrightarrow[n\to\infty]{\P} X$ and the sequence $(X_n^p,n\in\N)$ is uniformly integrable.
\item $\lim_{n\to\infty} \E (X_n-X)^p = 0$.
\end{enumerate}
\end{theoreme}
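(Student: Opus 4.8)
The plan is to prove the two implications of the equivalence separately, both built on one elementary engine: if $(Y_n)$ is a sequence of nonnegative random variables with $Y_n \xrightarrow[n\to\infty]{\P} 0$ and $(Y_n, n\in\N)$ uniformly integrable, then $\E Y_n \to 0$. I would establish this engine first. Fixing $\varepsilon > 0$ and a threshold $a > 0$, I would split
\[
\E Y_n = \E\big( Y_n \mathbbm{1}_{Y_n \leqslant \varepsilon}\big) + \E\big(Y_n \mathbbm{1}_{\varepsilon < Y_n \leqslant a}\big) + \E\big(Y_n \mathbbm{1}_{Y_n > a}\big).
\]
The first summand is at most $\varepsilon$; the last is at most $\sup_m \E(Y_m \mathbbm{1}_{Y_m > a})$, which uniform integrability makes smaller than $\varepsilon$ once $a$ is large; and the middle one is bounded by $a\,\P(Y_n > \varepsilon)$, which tends to $0$ by convergence in probability. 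Thus $\limsup_n \E Y_n \leqslant 2\varepsilon$ for every $\varepsilon$, giving $\E Y_n \to 0$.

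For the direction $(2) \Rightarrow (1)$, I would assume $\E|X_n - X|^p \to 0$. Convergence in probability follows from Markov's inequality, since $\P(|X_n - X| > \eta) \leqslant \eta^{-p}\,\E|X_n - X|^p$. Uniform integrability of $(|X_n|^p)$ I would obtain from the characterization recalled above (boundedness in $\L^1$ together with equicontinuity), using the elementary bound $|X_n|^p \leqslant C_p(|X_n - X|^p + |X|^p)$ with $C_p = \max(1, 2^{p-1})$: boundedness in $\L^1$ is immediate because $\E|X_n - X|^p$ is bounded and $\E|X|^p < \infty$, while equicontinuity follows by writing $\E(|X_n|^p \mathbbm{1}_A) \leqslant C_p(\E|X_n - X|^p + \E(|X|^p \mathbbm{1}_A))$, controlling the first term uniformly for $n$ large, the finitely many remaining indices by hand, and the last term through absolute continuity of the integral of the fixed integrable variable $|X|^p$.

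For the direction $(1) \Rightarrow (2)$, I would set $Y_n = |X_n - X|^p$. Then $Y_n \geqslant 0$ and $Y_n \xrightarrow[n\to\infty]{\P} 0$, since $|X_n - X| \to 0$ in probability and $t \mapsto t^p$ is continuous at $0$. The same inequality $Y_n \leqslant C_p(|X_n|^p + |X|^p)$ shows that $(Y_n)$ is dominated by the sum of the uniformly integrable family $(|X_n|^p)$ (by hypothesis) and the single integrable variable $|X|^p$, hence is itself uniformly integrable. Applying the engine to $(Y_n)$ yields $\E|X_n - X|^p \to 0$, which is exactly $(2)$.

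I expect the main obstacle to be the bookkeeping around uniform integrability rather than any deep idea: handling $p \geqslant 1$ and $0 < p < 1$ uniformly through the constant $C_p$, and checking cleanly that a sequence dominated by a sum of uniformly integrable families is uniformly integrable (equivalently, that the equicontinuity and $\L^1$-boundedness estimates survive the splitting). The conceptual heart is the three-term truncation in the engine lemma, which converts convergence in probability plus a uniform tail bound into $\L^1$ convergence.
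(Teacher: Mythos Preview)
The paper does not actually prove this statement: Theorem~\ref{theo_vitali} is stated in the appendix as ``one of the versions of Vitali's theorem'' and invoked as a known tool (in the proof of Lemma~\ref{lem_prob_1_109_Liese}), with no proof given. So there is nothing to compare against.

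Your argument is correct and is the standard proof. The engine lemma (three-term truncation turning convergence in probability plus a uniform tail bound into $\L^1$-convergence) is exactly the right core, and both implications are handled cleanly. A couple of small remarks: the statement in the paper writes $X_n^p$ and $(X_n-X)^p$ without absolute values, which only makes literal sense for nonnegative variables or even integers $p$; your reading with $|X_n|^p$ and $|X_n-X|^p$ is the intended one, and you do well to make that explicit. For the domination step in $(1)\Rightarrow(2)$, the cleanest justification that ``dominated by a uniformly integrable sequence implies uniformly integrable'' goes through the equicontinuity characterization recalled in the paper's Proposition just above the theorem (since $0\leqslant Y_n\leqslant Z_n$ gives $\E(Y_n\mathbbm{1}_A)\leqslant\E(Z_n\mathbbm{1}_A)$ for every event $A$), which matches the toolkit the paper sets up.
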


\bibliographystyle{imsart-nameyear}
\bibliography{../../Biblio}

\end{document}